\allowdisplaybreaks \numberwithin{equation}{section}
\numberwithin{equation}{section}
\newtheorem{theorem}{Theorem}[section]
\newtheorem{proposition}[theorem]{Proposition}
\newtheorem{lemma}[theorem]{Lemma}
\newtheorem*{Arnold's Second Stability Theorem}{Arnold's Second Stability Theorem}
\newtheorem*{Burton's Stability Criterion}{Burton's Stability Criterion}
\theoremstyle{definition}
\newtheorem{definition}[theorem]{Definition}
\theoremstyle{remark}
\newtheorem{remark}[theorem]{Remark}
\newtheorem{example}[theorem]{Example}
\begin{document}

\title[On the positive constant in Arnold's second stability theorem]{On the positive constant in Arnold's second stability theorem for a bounded  domain}

\author{Fatao Wang}
\address{School of Mathematical Sciences, Dalian University of Technology, Dalian 116024, PR China}
\email{wangfatao@mail.dlut.edu.cn}

\author{Guodong Wang}
\address{School of Mathematical Sciences, Dalian University of Technology, Dalian 116024, PR China}
\email{gdw@dlut.edu.cn}

\author{Bijun Zuo}
\address{College of Mathematical Sciences, Harbin Engineering University, Harbin {\rm150001}, PR China}
\email{bjzuo@amss.ac.cn}


\begin{abstract}
For a steady flow of a two-dimensional ideal fluid, the gradient vectors of the stream function $\psi$ and its vorticity $\omega$ are collinear. Arnold's second stability theorem states that the flow is Lyapunov stable if $0<\nabla\omega/\nabla\psi<C_{ar}$ for some $C_{ar}>0$. In this paper, we show that, for a bounded domain, $C_{ar}$ can be taken as the first eigenvalue $\bm\Lambda_1$ of a certain Laplacian eigenvalue problem. When  $\nabla\omega/\nabla\psi$ reaches $\bm\Lambda_1$, instability may occur, as illustrated by a non-circular steady flow in a disk; however, a certain form of structural stability still holds. Based on these results, we establish a theorem on the rigidity and orbital stability of steady Euler flows in a disk.
\end{abstract}

\maketitle

\tableofcontents

\section{Introduction and main results}\label{Sec1}

Stability theory in fluid dynamics examines how a fluid flow reacts to small perturbations, determining whether it remains close to its original state (stable) or diverges (unstable). For ideal (incompressible and inviscid) fluid flows governed by the Euler equations, stability analysis plays a crucial role in understanding the persistence of flow patterns such as vortices, jets, and other coherent structures.
In the setting of two-dimensional (2D) ideal flows, among the most important and useful stability results are Arnold's stability theorems, including  the first stability theorem and the second stability theorem, both established by Arnold \cite{A1,A2} in the 1960s. Roughly speaking, Arnold's stability theorems assert that  a steady flow of a two-dimensional ideal fluid is stable if its stream function $\psi$ and its vorticity $\omega$ satisfies one of the following two conditions:
 \begin{itemize}
\item[(A1)] $\nabla\omega/\nabla\psi<0$ (the first stability theorem), or
\item [(A2)]
$0<\nabla\omega/\nabla\psi<C_{ar}$
 for some sufficiently small positive constant $C_{ar}$ (the second stability theorem).
 \end{itemize}
 The first stability theorem is relatively simple, and its most general form can be obtained by repeating the argument in \cite[Section 5]{CWZ}. In contrast, the second stability theorem is significantly more complicated, particularly regarding its possible extensions \cite{WG96,WG98,W22,WZ23} and limitations \cite{And,WS00}.
This paper focuses on the positive constant
$C_{ar}$ for a smooth, bounded domain, whether it is simply or multiply connected, aiming to clarify the scope of application of the second stability theorem.

 \subsection{2D Euler equations in a bounded domain}
Throughout this paper, let $D\subset\mathbb R^2$ be a smooth, bounded domain.   Suppose that $\partial D$ has $N+1$ connected components, denoted by $\Gamma_0,\Gamma_1,\dots,\Gamma_N$,
where $\Gamma_0$ is the outer boundary, and $\Gamma_1,\dots,\Gamma_N$ are the $N$ inner boundaries.
Note that we allow $N=0,$ which means that $D$ is simply connected.

The motion of an ideal fluid in $D$ with impermeability boundary condition is described by the following Euler equations:
\begin{equation}\label{euler}
\begin{cases}
\partial_t\mathbf v+(\mathbf  v\cdot \nabla)\mathbf v=-\nabla P,&t\in\mathbb R,\ \mathbf x=(x_1,x_2)\in D,\\
\nabla\cdot\mathbf v=0,&t\in\mathbb R,\ \mathbf x\in D,\\
\mathbf v\cdot\mathbf n=0,&t\in\mathbb R,\ \mathbf x\in \partial D,\\
\mathbf v|_{t=0}=\mathbf v_0,&\mathbf x\in D,\\
\end{cases}
\end{equation}
where $\mathbf v(t,\mathbf x)=(v_1(t,\mathbf x), v_2(t,\mathbf x))$ is the velocity field, $P(t,\mathbf x)$ is the scalar pressure, and $\mathbf n(\mathbf x)$ is the outward (with respect to $D$) unit normal at $\mathbf x\in\partial D.$

We now derive the stream function formulation and the vorticity–circulation pair formulation of the Euler equations \eqref{euler}.
By the divergence-free condition $\nabla\cdot\mathbf v=0$ and the impermeability boundary condition $\mathbf v\cdot\mathbf n|_{\partial D}=0$, there exists a scalar function $\psi$ that is piecewise constant on the boundary, called the \emph{stream function}, such that
\[
\mathbf v=\nabla^\perp\psi,\quad \nabla^\perp:=(\partial_{x_2}, -\partial_{x_1}).
\]
Note that the stream function can differ by an arbitrary constant.
\emph{Throughout this paper, we always assume that the stream function has zero mean.}
Let $\omega:=\partial_{x_1}v_2-\partial_{x_2}v_1$ be the scalar  \emph{vorticity} of the fluid, and   $\bm\gamma=(\gamma_1,\dots,\gamma_N)\in\mathbb R^{N}$  be the \emph{circulation vector} around the $N$ inner boundary components:
\[\gamma_i=\int_{\Gamma_i}\mathbf v\cdot\mathbf r dS=-\int_{\Gamma_i}\frac{\partial \psi}{\partial\mathbf n}dS, \quad i=1,\dots,N,\]
where $\mathbf{r}$ denotes the anticlockwise rotation of $\mathbf{n}$ by $\pi/2$, and $dS$ denotes the length element on $\partial D$.
Note that $\bm\gamma$ is conserved by Kelvin's circulation theorem.
One can check that $\psi$ satisfies
\begin{equation}\label{psieq0}
\begin{cases}
-\Delta \psi=\omega,&\mathbf x\in D,\\
\psi|_{\Gamma_{i}}={\rm constant},&i= 0,1,\dots,N,\\
\int_{\Gamma_{i}}\nabla\psi\cdot \mathbf n dS=-\gamma_i,&i= 1,\dots,N,\\
\int_D\psi d\mathbf x=0.
\end{cases}
 \end{equation}
Note that the constant value of $\psi$ on each boundary component may vary with time.
By Proposition \ref{prop21} in Section \ref{Sec2}, given $\omega$ and $\bm\gamma=(\gamma_1,\dots,\gamma_N)\in\mathbb R^{N}$, the elliptic problem \eqref{psieq0} has a unique solution. Therefore,  \emph{the motion of an ideal fluid in $D$ can be described from three independent perspectives: the velocity field $\mathbf v$,  the stream function $\psi$, and the vorticity-circulation pair $(\omega,\bm\gamma).$} Note that  the circulation vector $\bm\gamma$ disappears when $D$ is simply connected. In terms of the stream function $\psi$, the Euler equations \eqref{euler} can be written as
\begin{equation}\label{eqpsi}
\partial_t(\Delta \psi)+\nabla^\perp\psi\cdot\nabla(\Delta\psi)=0.
\end{equation}

Conservation laws of the Euler equations play an essential role in this paper. In addition to the circulation vector $\bm\gamma$ mentioned above, the \emph{kinetic energy} $E$ is also a conserved quantity:
\begin{equation}\label{kien}
E(t)=E(0)\quad\forall\, t\in\mathbb R,\quad E(t):=\frac{1}{2}\int_D|\mathbf v(t,\mathbf x)|^2 d\mathbf x
\end{equation}
Another important conservation law is the  \emph{isovortical property}, which states that the distribution function of the vorticity remains unchanged with time (see \cite[Chapter 1]{MB}).
To describe the isovortical property more concisely, we introduce some notation.
Given two Lebesgue measurable functions $v,w:D\mapsto \mathbb R,$ we say that $v$ is  \emph{equimeasurable} with $w$, written as $v\sim w$, if they satisfy
\begin{equation}\label{simdf}
\left|\left\{\mathbf x\in D \;\middle|\;  v(\mathbf x)>s\}|=|\{\mathbf x\in D \;\middle|\;  w(\mathbf x)>s\right\}\right|\quad\forall\, s\in\mathbb R.
\end{equation}
  Here and hereafter, $|A|$ denotes the two-dimensional Lebesgue measure of any Lebesgue measurable set $A$.
  For a measurable function $v: D\mapsto \mathbb R$, denote by $\mathcal R_v$ the \emph{rearrangement class} of $v$; that is, the set of all measurable functions that are equimeasurable with $v,$
\begin{equation}\label{rcdf}
\mathcal R_v:=\left\{w: D\mapsto \mathbb R \mbox{ is Lebesgue measurable} \;\middle|\;  w\sim v\right\}.
\end{equation}
Then the isovortical property can be written as
\begin{equation}\label{isovp}
\omega(t,\cdot)\in\mathcal R_{\omega_0}\quad\forall\, t\in\mathbb R.
\end{equation}

 \subsection{Arnold's second stability theorem}
 As can be easily seen from \eqref{eqpsi}, a flow is steady if and only if  the gradient vectors of its stream function and its vorticity are collinear.  For this to hold, a sufficient condition is that the stream function $\psi^s$ satisfies
\begin{equation}\label{semilieq}
\begin{cases}
-\Delta  \psi^s=g(\psi^s),&\mathbf x\in D,\\
\psi^s|_{\Gamma_i}={\rm constant},& i=0,1,\dots,N,\\
\int_D\psi^s d\mathbf x=0
\end{cases}
\end{equation}
for some $g\in C^1(\mathbb R)$.

To study the Lyapunov stability of  the steady flow  \eqref{semilieq}, Arnold \cite{A1,A2}
introduced  the following energy-Casimir ($EC$)  functional:
 \[EC(\psi)=\frac{1}{2}\int_D|\nabla \psi|^2d\mathbf x-\int_DF(-\Delta \psi)d\mathbf x-\sum_{i=0}^N\psi^s\big|_{\Gamma_i}\int_{\Gamma_i}
 \frac{\partial(\psi-\psi^s)}{\partial\mathbf n}dS,\ \ F(s) :=\int_0^s g^{-1}(\tau)d\tau.\]
Here, $g$ is assumed to be invertible.
 The steady condition \eqref{semilieq} ensures $\delta EC|_{\psi^s}=0$. As in the finite-dimensional case, stability is expected if $\delta^2 EC|_{\psi^s}$ is positive or negative definite. By computing $\delta^2 EC|_{\psi^s}$, Arnold established the following theorem, the detailed proof of which can also be found in \cite{AK, MP}.

 \begin{theorem}[Arnold's second stability theorem, \cite{A1,A2}]\label{asst0}
Suppose that
$\psi^s\in C^{2}(\bar D)$ solves \eqref{semilieq}, where $g\in C^{1}(\mathbb R)$ satisfies
\begin{equation}\label{a2tj1}
 \min_{\bar D}g'(\psi^s)>0.
\end{equation}
Then there exists some $C_{ar}>0$, such that if
\begin{equation}\label{a2tj2}
  \max_{\bar D}g'(\psi^s)<C_{ar},
\end{equation}
then the associated steady flow  is  stable in the following sense: for any $\varepsilon>0,$ there exists some $\delta>0,$ such that for any smooth Euler flow with stream function $\psi,$ it holds that
 \[\|\psi(0,\cdot)-\psi^s\|_{H^2(D)}<\delta\quad \Longrightarrow\quad
 \|\psi(t,\cdot)-\psi^s\|_{H^2(D)}<\varepsilon\quad\forall\, t\in\mathbb R.\]
 \end{theorem}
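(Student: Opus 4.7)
The plan is to follow Arnold's energy--Casimir method: construct the conserved functional $EC$, show $\psi^s$ is a critical point, and prove that its second variation is negative definite with a quantitative coercivity constant determined by the Laplacian eigenvalue problem alluded to in the abstract.

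First I would verify that $EC(\psi(t))$ is constant along any smooth solution of \eqref{eqpsi}. The first term $\tfrac{1}{2}\int_D |\nabla \psi|^2 d\mathbf{x}$ equals the kinetic energy $E$ and is conserved by \eqref{kien}. The second term $\int_D F(-\Delta\psi)d\mathbf{x}=\int_D F(\omega)d\mathbf{x}$ is a Casimir and is conserved because the isovortical property \eqref{isovp} preserves the distribution function of $\omega$. The boundary term is linear in $\psi-\psi^s$; Kelvin's circulation theorem guarantees that $\int_{\Gamma_i}\partial_\mathbf{n}(\psi-\psi^s)dS$ is time-independent, so this term is constant along the flow.

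Next I would verify $\delta EC|_{\psi^s}=0$ and compute the remainder exactly. Writing $\phi=\psi-\psi^s$, a Taylor expansion of $F$ with Lagrange remainder combined with $F'(-\Delta\psi^s)=g^{-1}(g(\psi^s))=\psi^s$ and integration by parts (using that $\psi^s$ is constant on each $\Gamma_i$) collapses all linear pieces, yielding the clean identity
\[
EC(\psi^s+\phi)-EC(\psi^s)=\tfrac{1}{2}\int_D |\nabla \phi|^2 d\mathbf{x}-\tfrac{1}{2}\int_D F''(\xi(\mathbf{x}))(\Delta\phi)^2 d\mathbf{x},
\]
where $\xi(\mathbf{x})$ lies between $-\Delta\psi^s$ and $-\Delta\psi$. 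Because $F''=1/(g'\circ g^{-1})$, the hypothesis \eqref{a2tj1} and continuity give $F''(\xi)\geq 1/(\max_{\bar D}g'(\psi^s)+\eta)$ whenever $\phi$ is sufficiently small (this will ultimately determine the admissible $\delta$).

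I would then identify $\bm\Lambda_1=C_{ar}$ as the sharp constant in a Poincar\'e-type inequality
\[
\int_D (\Delta\phi)^2 d\mathbf{x}\geq \bm\Lambda_1\int_D |\nabla \phi|^2 d\mathbf{x}
\]
for all $\phi$ in the natural admissible class (constant on each $\Gamma_i$, zero circulation $\int_{\Gamma_i}\partial_\mathbf{n}\phi\,dS=0$ for $i\geq 1$, and zero mean). The class is precisely the one in which $\phi=\psi-\psi^s$ always lies, by the boundary description of the stream function and Kelvin's theorem. Combining the two estimates,
\[
EC(\psi^s)-EC(\psi^s+\phi)\geq \tfrac{1}{2}\Bigl[\tfrac{1}{\max g'(\psi^s)+\eta}-\tfrac{1}{\bm\Lambda_1}\Bigr]\int_D (\Delta\phi)^2 d\mathbf{x},
\]
and picking $\eta$ small when \eqref{a2tj2} holds with $C_{ar}=\bm\Lambda_1$ produces the coercivity $EC(\psi^s)-EC(\psi)\geq c\,\|\phi\|_{H^2}^2$ (elliptic regularity converts $\|\Delta\phi\|_{L^2}$ control to the full $H^2$-norm control). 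A standard continuity argument then closes the stability loop: smallness of $\|\psi(0,\cdot)-\psi^s\|_{H^2}$ forces smallness of $|EC(\psi(0,\cdot))-EC(\psi^s)|$, conservation of $EC$ propagates this in time, and coercivity converts it back into smallness of $\|\psi(t,\cdot)-\psi^s\|_{H^2}$.

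The main obstacle I anticipate is the $F''(\xi)$ step: to replace $F''(\xi)$ by the uniform bound $1/\max g'(\psi^s)$ one needs $\xi(\mathbf{x})$, which depends on $-\Delta\phi(\mathbf{x})$ pointwise, to stay within a range where $g'\circ g^{-1}$ is controlled. In two dimensions $H^2$ does not embed into $C^0$ for $\Delta\phi$, so some care is needed; one natural remedy is to extend $g$ smoothly so $g'$ is globally bounded below on the relevant interval, or to work with slightly stronger regularity of the perturbation and pass to the limit. Once that uniformity is secured, the rest of the argument is standard functional analysis on the conserved quantity $EC$.
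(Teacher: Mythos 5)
The paper does not actually prove Theorem \ref{asst0}; it is quoted as a classical result with the proof deferred to \cite{A1,A2,AK,MP}, and the only internal commentary is the remark that one may take $C_{ar}=\mathsf m$ with $\mathsf m$ as in \eqref{infimum}. Your proposal reproduces the standard energy--Casimir argument of those references, and its skeleton is sound: the cancellation of the linear terms (the boundary term in $EC$ is designed exactly so that $\int_D\nabla\psi^s\cdot\nabla\phi\,d\mathbf x$ integrates by parts against $F'(-\Delta\psi^s)=\psi^s$ with no leftover), the Taylor remainder identity, and the final continuity argument are all correct. The $F''(\xi)$ uniformity issue you flag is real and is handled exactly as you suggest, by modifying $g$ outside a compact interval so that $g'$ is bounded above and below globally (cf.\ the paper's use of \cite[Lemma 2.4]{W22} in Section \ref{Sec5}).

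The genuine gap is the identification of the coercivity constant. You assert that $\phi=\psi-\psi^s$ always lies in the class on which $\int_D(\Delta\phi)^2\,d\mathbf x\geq\bm\Lambda_1\int_D|\nabla\phi|^2\,d\mathbf x$ holds, i.e.\ essentially $\mathbf Y$ of \eqref{dfyy}: zero mean, constant on each $\Gamma_i$, \emph{and} $\int_{\Gamma_i}\partial_{\mathbf n}\phi\,dS=0$ for every $i$. That last set of constraints fails for a generic small $H^2$ perturbation of the stream function: Kelvin's theorem makes the circulations of $\phi$ around $\Gamma_1,\dots,\Gamma_N$ constant in time, but they equal $\gamma_i^s-\gamma_i$, which need not vanish, and the flux through $\Gamma_0$ is tied to $\int_D\Delta\phi\,d\mathbf x$, which a generic perturbation of the vorticity also changes. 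Dropping these constraints degrades the sharp constant from $\bm\Lambda_1$ to $\mathsf m$ of \eqref{infimum} (equal to $\lambda_1<\Lambda_1<\bm\Lambda_1$ for simply connected $D$, and actually $0$ for multiply connected $D$, since a harmonic function constant on each $\Gamma_i$ has $\Delta\phi=0$ but $\nabla\phi\not\equiv0$ --- so in the multiply connected case the direct second-variation argument needs an additional restriction on the admissible perturbations to give any positive $C_{ar}$ at all). Since the theorem only asserts the existence of \emph{some} $C_{ar}>0$, your argument still proves it for simply connected domains with $C_{ar}=\lambda_1$; but the claim that this route yields $C_{ar}=\bm\Lambda_1$ is not justified. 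That sharper statement is precisely Theorem \ref{thm1} of the paper, and it is obtained there by an entirely different mechanism --- maximization of the energy over the rearrangement class of the vorticity at fixed circulation vector (the Burton-type criterion of Section \ref{Sec4}), in which the troublesome circulation and mean-value defects are absorbed into the conserved data $(\omega,\bm\gamma)$ rather than into the perturbation class of the quadratic form.
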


 \begin{remark}
In the original form of Arnold's stability theorems (both the first and the second), conditions are imposed on the ratio  $\nabla\psi^s/\nabla\Delta\psi^s$. However, it was tacitly assumed in the proofs that the vorticity should be a monotone function of the stream function.
 \end{remark}

\begin{remark}
By examining the proof of Theorem \ref{asst0},  one can take $C_{ar}=\mathsf m,$ where
\begin{equation}\label{infimum}
\mathsf m:=\inf\left\{\frac{\int_D|\Delta u|^2d\mathbf x}{\int_D|\nabla u|^2d\mathbf x}\;\middle|\; u\in H^2(D), \ u|_{\Gamma_i}=c_i,\ i=0, 1,\dots,N\right\}.
\end{equation}
\emph{Here and hereafter, $c_i$ denotes unprescribed constants.}
One can readily verify that $\mathsf m=\lambda_1$
when $D$ is simply connected,  where $\lambda_k$ denotes the $k$-th eigenvalue of the zero-Dirichlet Laplacian eigenvalue problem:
\begin{equation}\label{zerolpls}
\begin{cases}
  -\Delta u=\lambda u, & \mathbf x\in D, \\
  u=0, & \mathbf x\in \partial D.
\end{cases}
\end{equation}
\end{remark}

Arnold's theory has been developed by many authors. Some closely related works are listed below in chronological order:
\begin{itemize}
\item In the 1990s, Wolansky and Ghil \cite{WG96,WG98} developed the  supporting functional method, and showed that \eqref{a2tj2} can be replaced by the following weaker condition when $D$ is simply connected:
\begin{equation}\label{c2b}
   \min_{u\in H^{1}_{0}(D),\ \|u\|=1} \int_{D}|\nabla u|^{2}-g'(\psi^s)u^2 d\mathbf x+\|\mathrm Pu\|^2>0,
\end{equation}
where $\|\cdot\|$ is the norm induced by the inner product $<\cdot,\cdot>_{g'(\psi^s)}$,
\[<u_1,u_2>_{g'(\psi^s)}:=\int_Du_1u_2g'(\psi^s)d\mathbf x,\]
and $\mathrm P$ is the projection operator with respect to  $<\cdot,\cdot>_{g'(\psi^s)}$  onto the associated Casimir space,  the $L^2$-closure of
\[\left\{u:D\to \mathbb R \;\middle|\; u=h( \psi),\ h\in C^1(\mathbb R)\right\}.\]
See also \cite{Lin04} for an equivalent formulation of the condition \eqref{c2b} in terms of level sets of the stream function.
\item Inspired by the works of Wolansky and Ghil \cite{WG96,WG98} and Burton's rearrangement theory \cite{B87,B05}, Wang \cite{W22} showed that, when $D$ is simply connected,  stability still holds  if \eqref{a2tj1} is relaxed to $\min_{\bar D}g'(\psi^s)\geq 0$ and  \eqref{a2tj2} is relaxed to
 \begin{equation}\label{c2w}
   \min_{u\in H^{1}_{0}(D),\ \|u\|_{L^2(D)}=1} \int_{D}|\nabla u|^{2}-g'(\psi^s)u^2 d\mathbf x\geq 0.
\end{equation}
Moreover, the stability in \cite{W22}  is measured in terms of the $L^p$-norm of the vorticity (or, equivalently, the $W^{2,p}$-norm of the stream function) for any $1<p<\infty$, which is more general.
\item
Subsequently, Wang and Zuo \cite{WZ22} extended the
 result  of \cite{W22} to multiply connected domains.
A straightforward corollary of \cite[Theorem 1.5]{WZ22} is that the assumption \eqref{a2tj2} in Theorem \ref{asst0} can be replaced by
\begin{equation}\label{leq96}
  \max_{\bar D}g'(\psi^s)\leq \Lambda_1,
\end{equation}
where
\begin{equation}\label{caplam1}
 \Lambda_1 := \inf_{u \in \mathcal{Y},\, u \not\equiv 0}
\frac{\int_D |\Delta u|^2  d\mathbf{x}}{\int_D |\nabla u|^2 d\mathbf{x}},
\end{equation}
and
\[
\mathcal{Y} := \left\{ u \in H^2(D) \;\middle|\; u|_{\Gamma_0} = 0,\
u|_{\Gamma_i} = c_i,\
\int_{\Gamma_i} \frac{\partial u}{\partial \mathbf{n}}  dS = 0,\ i=1,\dots,N \right\}.
\]
It is clear that $\Lambda_1\geq \mathsf m,$ where $\mathsf m$ is given by \eqref{infimum}.
\item Recently, Wang \cite{W24j}
 showed that when $D$ is a disk,
the steady flow related to any first eigenfunction of
\begin{equation}\label{egendk}
\begin{cases}
 -\Delta u=\bm\Lambda u, & \mathbf x\in D, \\
  u|_{\partial D}={\rm constant},\\
  \int_D ud\mathbf x=0
\end{cases}
\end{equation}
is \emph{orbitally stable} up to rigid rotation. Note that instability may occur for these first eigenstates (see Example \ref{examins} below), which means that
$C_{ar}$ cannot exceed the first eigenvalue of \eqref{egendk}.
\end{itemize}

  \subsection{Main results}

To begin with, we study the following Laplacian eigenvalue problem, which serves as the foundation for the subsequent discussion:
 \begin{equation}\label{lepzm}
 \begin{cases}
 -\Delta u=\bm\Lambda u,&\mathbf x\in D,\\
 u\in\mathbf Y,
 \end{cases}
 \end{equation}
 where
  \begin{equation}\label{dfyy}
\mathbf Y:=\left\{u\in H^2(D) \;\middle|\; \int_D ud\mathbf x=0,\ u|_{\Gamma_i}=c_i,\
 \int_{\Gamma_i}\frac{\partial u}{\partial \mathbf n} dS=0, \ i=0,1,\dots,N
  \right\}.
  \end{equation}
Note that  \eqref{lepzm} is mainly inspired by \cite{W24j}; when $D$ is simply connected, \eqref{lepzm} reduces to \eqref{egendk}.

 \begin{theorem}[Laplacian eigenvalue problem]\label{thm0}
 The following statements hold for  \eqref{lepzm}:
 \begin{itemize}
 \item[(i)] The eigenvalues, counted without multiplicity, are all real and positive. They can be arranged as
\[0<\bm\Lambda_1<\bm\Lambda_2<\dots\to+\infty,\]
and the eigenspace $\mathbf E_k$  associated with each $\bm\Lambda_k$ is finite-dimensional.
\item[(ii)] For any $u\in\mathbf Y$, it holds that
\begin{equation}\label{infory}
\bm\Lambda_1\int_Du^2d\mathbf x\leq \int_D|\nabla u|^2d\mathbf x,
\end{equation}
and the equality holds if and only if $u\in\mathbf E_1$.
\item[(iii)]
  $\bm\Lambda_1>\Lambda_1$, where $\Lambda_1$ is given by \eqref{caplam1}.
  \end{itemize}
 \end{theorem}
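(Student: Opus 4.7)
The plan is to handle (i) and (ii) via standard spectral theory of compact self-adjoint operators, and to prove (iii) by a two-stage comparison with an auxiliary second-order eigenvalue problem.

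For (i), I would work in the Hilbert space $\widetilde{\mathbf Y} := \{u \in H^1(D) : \int_D u\,d\mathbf x = 0,\; u|_{\Gamma_i}=c_i \text{ for each }i\}$ equipped with the Dirichlet inner product, which is equivalent to the $H^1$ norm by the zero-mean Poincar\'e inequality. A Lax--Milgram solution operator $T : L^2(D) \to \widetilde{\mathbf Y}$, viewed as $L^2(D) \to L^2(D)$, is self-adjoint and compact by Rellich--Kondrachov, so the spectral theorem yields a sequence of positive eigenvalues $\mu_k \to 0^+$ whose reciprocals, grouped without multiplicity, give the $\bm\Lambda_k$. Positivity is clear because $\widetilde{\mathbf Y}$ contains no nonzero constant. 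Elliptic regularity upgrades eigenfunctions to $H^2(D)$, and the natural boundary condition $\int_{\Gamma_i}\partial_{\mathbf n}u\,dS=0$ emerges by testing the weak formulation against suitably chosen $\phi\in\widetilde{\mathbf Y}$ and integrating by parts. Part (ii) then follows from the Rayleigh quotient characterization of $\bm\Lambda_1$, with the equality case identifying $\mathbf E_1$.

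For (iii), my first task is to identify $\Lambda_1$ with the smallest eigenvalue $\mu_1$ of the auxiliary second-order problem $-\Delta w=\mu w$ on $\mathcal W := \{w\in H^1(D) : w|_{\Gamma_0}=0,\; w|_{\Gamma_i}=c_i\;(i\ge 1)\}$ subject to $\int_{\Gamma_i}\partial_{\mathbf n}w\,dS=0$ for $i\ge 1$. For $\Lambda_1\ge\mu_1$ I would take a minimizer $v\in\mathcal Y$ of the $\Lambda_1$ quotient, derive its biharmonic Euler--Lagrange equation $\Delta^2 v+\Lambda_1\Delta v=0$ together with the natural boundary conditions $\Delta v|_{\Gamma_0}=0$, $\Delta v|_{\Gamma_i}$ constant, and $\int_{\Gamma_i}\partial_{\mathbf n}(\Delta v)\,dS=0$ for $i\ge 1$, and observe that $-\Delta v$ is then a nonzero auxiliary eigenfunction with eigenvalue $\Lambda_1$. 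For $\Lambda_1\le\mu_1$, given a first auxiliary eigenfunction $\psi$, I would invert $-\Delta$ within $\mathcal Y$ via Proposition \ref{prop21} to find $v\in\mathcal Y$ with $-\Delta v=\psi$; two applications of Green's identity then give $\int_D|\nabla v|^2\,d\mathbf x = \int_D v\psi\,d\mathbf x = \mu_1^{-1}\int_D\psi^2\,d\mathbf x$ and $\int_D|\Delta v|^2\,d\mathbf x=\int_D\psi^2\,d\mathbf x$, so $v$ has Rayleigh quotient $\mu_1$.

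For the strict comparison $\bm\Lambda_1>\mu_1$, I would take any $u\in\mathbf E_1$, set $c_0:=u|_{\Gamma_0}$ and $w:=u-c_0\in\mathcal W$, and use $\int_D u\,d\mathbf x=0$ to compute
\[
\frac{\int_D|\nabla w|^2\,d\mathbf x}{\int_D w^2\,d\mathbf x}=\frac{\bm\Lambda_1\int_D u^2\,d\mathbf x}{\int_D u^2\,d\mathbf x+c_0^2|D|}\le\bm\Lambda_1,
\]
with strict inequality whenever $c_0\neq 0$. If some $u\in\mathbf E_1$ has $c_0\ne0$, then $\mu_1<\bm\Lambda_1$ immediately; otherwise every $u\in\mathbf E_1$ lies in $\mathcal W$ with Rayleigh quotient $\bm\Lambda_1$, so if $\bm\Lambda_1=\mu_1$ such a $u$ would be a first auxiliary eigenfunction. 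To rule this out I would apply the Rayleigh argument to $|u|\in\mathcal W$ and invoke the strong minimum principle for the superharmonic function $|u|$ (since $-\Delta|u|=\mu_1|u|\ge0$) to conclude that $|u|>0$ in $D$, which forces $u$ to be of one sign and contradicts $\int_D u\,d\mathbf x=0$. I expect the main obstacle to be the identification $\Lambda_1=\mu_1$: one must derive the complete set of natural boundary conditions for the biharmonic Rayleigh quotient on a multiply connected domain, and invert $-\Delta$ within $\mathcal Y$ with the prescribed mixed boundary data. The strong-minimum-principle step is a secondary but essential ingredient for closing the borderline case.
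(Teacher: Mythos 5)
Your proposal is correct and follows essentially the same route as the paper: parts (i)--(ii) via the spectral theorem for a compact, self-adjoint, positive-definite solution operator, with the zero-flux conditions recovered as natural boundary conditions by testing against $\zeta_i-\mathsf A_{\zeta_i}$ (exactly the computation in the paper's Lemma~\ref{lemmp1}); and part (iii) via the comparison of $u-u|_{\Gamma_0}\in\mathcal X$ with the second-order Rayleigh quotient characterizing $\Lambda_1$, using $\int_D u\,d\mathbf x=0$ to expand the denominator and the one-sign property of first eigenfunctions to exclude the borderline case. The only divergence is that you re-derive the identification of $\Lambda_1$ with the first eigenvalue of the auxiliary second-order problem and the sign property of its minimizers, which the paper simply imports from \cite{WZ22} as Lemma~\ref{lemmp0} (and note that the inversion of $-\Delta$ you need for the $\Lambda_1\le\mu_1$ direction is Lemma~\ref{lm20}(iii), i.e.\ the operator $\mathsf P$, rather than Proposition~\ref{prop21}, though either works after subtracting the constant boundary value on $\Gamma_0$).
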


Throughout this paper, let $1<p<\infty$ be fixed. The following theorem shows that the constant $C_{ar}$ in Theorem \ref{asst0} can be taken as $\bm\Lambda_1$, and that the norm measuring stability may be chosen more generally.
\begin{theorem}[Stability: $g'<\bm\Lambda_1$]\label{thm1}
Suppose that
$\psi^s\in C^{2}(\bar D)$ solves \eqref{semilieq}, where $g\in C^{1}(\mathbb R)$. Suppose that
\begin{equation}\label{condg}
 0\leq \min_{\bar D} g' (\psi^s)\leq \max_{\bar D}g' (\psi^s)<\bm\Lambda_1.
 \end{equation}
Denote by $(\omega^s,\bm\gamma^s)$ the vorticity-circulation pair related to $\psi^s$, i.e.,
\begin{equation}\label{vcp01}
\omega^s=-\Delta\psi^s,\quad \bm\gamma^s=(\gamma_1,\dots,\gamma_N),\quad
\gamma_i:=-\int_{\Gamma_i}\frac{\partial \psi^s}{\partial \mathbf n}dS.
\end{equation}
Then, for any $\varepsilon>0$, there exists some $\delta>0$ such that for any  smooth Euler flow with vorticity-circulation pair $(\omega,\bm\gamma)$, the following holds:
 \[\|\omega(0,\cdot)-\omega^s\|_{L^p(D)}+|\bm\gamma-\bm\gamma^s|<\delta
 \quad\Longrightarrow\quad\|\omega(t,\cdot)-\omega^s\|_{L^p(D)}<\varepsilon\quad \forall \, t\in\mathbb R,\]
 where $|\bm\gamma-\bm\gamma^s|$ denotes the Euclidean distance between  $\bm\gamma$ and $\bm\gamma^s$ in $\mathbb R^N.$
\end{theorem}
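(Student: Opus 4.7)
I would prove this by contradiction, following the variational energy--Casimir strategy adapted to the $L^p$ vorticity norm in \cite{W22} and extended to multiply connected domains in \cite{WZ22}. Suppose the conclusion fails; then there exist $\varepsilon_0 > 0$, a sequence of smooth Euler flows with vorticity--circulation pairs $(\omega_n, \bm\gamma_n)$, and times $t_n$ with
\[
\|\omega_n(0,\cdot) - \omega^s\|_{L^p(D)} + |\bm\gamma_n - \bm\gamma^s| \to 0,\qquad \|\omega_n(t_n,\cdot) - \omega^s\|_{L^p(D)} \ge \varepsilon_0.
\]
Write $\tilde\omega_n := \omega_n(t_n,\cdot)$, let $\tilde\psi_n$ be the stream function of $(\tilde\omega_n,\bm\gamma_n)$ via \eqref{psieq0}, and set $\phi_n := \tilde\psi_n - \psi^s$, $\eta_n := \tilde\omega_n - \omega^s = -\Delta\phi_n$. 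The isovortical property \eqref{isovp} gives $\tilde\omega_n \sim \omega_n(0,\cdot)$, hence the uniform $L^p$ bound $\|\tilde\omega_n\|_{L^p(D)} = \|\omega_n(0,\cdot)\|_{L^p(D)}$ and $\int_D\tilde\omega_n = \int_D\omega_n(0,\cdot) \to \int_D\omega^s$; Kelvin's theorem keeps $\bm\gamma_n$ constant in time.

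Next I would introduce the Arnold-type conserved functional
\[
\mathcal J(\psi) := \tfrac12 \int_D |\nabla\psi|^2 d\mathbf x - \int_D F(-\Delta\psi) d\mathbf x - \sum_{i=0}^N \psi^s|_{\Gamma_i} \int_{\Gamma_i} \frac{\partial(\psi-\psi^s)}{\partial\mathbf n} dS,
\]
where $F$ is a primitive of a monotone extension of $g^{-1}$ satisfying $F'(\omega^s) = \psi^s$. Each of the three terms is conserved along the Euler flow: the first by \eqref{kien}, the second because $\omega$ and $\omega(0,\cdot)$ are equimeasurable by \eqref{isovp}, and the third because every boundary flux $\int_{\Gamma_i}\partial\psi/\partial\mathbf n dS$ is conserved. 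Hence $\mathcal J(\tilde\psi_n)$ is constant in time and, by continuity in the $L^p(D)\times\mathbb R^N$ topology, tends to $\mathcal J(\psi^s)$. The boundary correction is engineered so that, after integration by parts using \eqref{semilieq} and $F'(\omega^s) = \psi^s$, the first variation of $\mathcal J$ at $\psi^s$ vanishes identically; Taylor expansion then yields
\[
\mathcal J(\tilde\psi_n) - \mathcal J(\psi^s) = \tfrac12 Q(\phi_n) + o(\|\nabla\phi_n\|_{L^2}^2),\qquad Q(\phi) := \int_D |\nabla\phi|^2 d\mathbf x - \int_D g'(\psi^s)\phi^2 d\mathbf x.
\]

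The decisive coercivity step uses Theorem \ref{thm0}. I would verify that $\phi_n$ lies in the admissible class $\mathbf Y$ up to an $o(1)$ perturbation: its mean vanishes by the zero-mean convention, its boundary values on each component are constant by \eqref{psieq0}, and the inner-boundary fluxes $\int_{\Gamma_i}\partial\phi_n/\partial\mathbf n dS = -(\gamma_i^n - \gamma_i^s)$ tend to zero. Theorem \ref{thm0}(ii) together with the strict bound $\max_{\bar D}g'(\psi^s) < \bm\Lambda_1$ then furnishes the uniformly positive coercive estimate
\[
Q(\phi_n) \ge \Bigl(1 - \bm\Lambda_1^{-1}\max_{\bar D}g'(\psi^s)\Bigr)\int_D |\nabla\phi_n|^2 d\mathbf x + o(1).
\]
Since the left-hand side is $o(1)$, we conclude $\|\nabla\phi_n\|_{L^2(D)}\to 0$; elliptic regularity applied to $-\Delta\phi_n = \eta_n$ upgrades this to $\|\eta_n\|_{L^2(D)}\to 0$, and a standard rearrangement argument in the spirit of \cite{B87,B05}, comparing the distribution functions of $\tilde\omega_n$ with that of $\omega^s$ via equimeasurability with $\omega_n(0,\cdot)$, promotes convergence to $\|\eta_n\|_{L^p(D)}\to 0$, contradicting $\|\eta_n\|_{L^p(D)}\ge\varepsilon_0$. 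The main obstacle I foresee is the degenerate case $\min_{\bar D}g'(\psi^s) = 0$ permitted by \eqref{condg}: there $g^{-1}$ fails to be smooth and the Casimir $\int_D F(-\Delta\psi) d\mathbf x$ is not defined classically; as in \cite{W22,WZ22}, I would handle this by replacing the Casimir with a Burton-type supporting functional on $\mathcal R_{\omega^s}$, which requires only the monotonicity $g'\ge 0$ and produces the same quadratic form $Q$ upon expansion.
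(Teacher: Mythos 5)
Your strategy is the classical Arnold energy--Casimir argument, whereas the paper proves Theorem~\ref{thm1} by a different route: it shows (Proposition~\ref{propsingleton}) that $\omega^s$ is the \emph{unique maximizer} of the kinetic energy $E(\cdot,\bm\gamma^s)$ on the rearrangement class $\mathcal R_{\omega^s}$, using the Legendre transform of an antiderivative of $g$ together with the spectral inequality \eqref{rk31in}, and then invokes the Burton-type compactness criterion of Theorem~\ref{thmbsc}. As written, your version has genuine gaps. First, the Taylor expansion of $\mathcal J$ does not produce the quadratic form $Q(\phi)=\int_D|\nabla\phi|^2-g'(\psi^s)\phi^2\,d\mathbf x$. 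Writing $\eta=-\Delta\phi$, the Casimir term contributes $-\tfrac12\int_D F''(\cdot)\eta^2\,d\mathbf x$ with $F''=(g^{-1})'=1/g'$, so the second variation is $\int_D|\nabla\phi|^2-(\Delta\phi)^2/g'(\psi^s)\,d\mathbf x$, which under \eqref{condg} is \emph{negative} definite by \eqref{ineq212}; the form you wrote down belongs to the dual (Wolansky--Ghil supporting-functional) formulation and cannot be extracted from $\mathcal J$ by expanding in $\phi$. Second, even granting your $Q$ and its coercivity, the step ``elliptic regularity applied to $-\Delta\phi_n=\eta_n$ upgrades $\|\nabla\phi_n\|_{L^2}\to0$ to $\|\eta_n\|_{L^2}\to0$'' runs the estimate backwards: $\|\eta\|_{L^2}$ controls $\|\phi\|_{H^2}$, not conversely, so controlling only $\|\nabla\phi_n\|_{L^2}$ gives no convergence of the vorticity. (With the correct quadratic form one controls $\|\Delta\phi_n\|_{L^2}$ directly, which is precisely why the classical argument works when $g'$ is bounded away from $0$.)

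Third, and most importantly, the hypothesis \eqref{condg} allows $\min_{\bar D}g'(\psi^s)=0$, and at that endpoint the functional $\mathcal J$ is not even well defined: $g^{-1}$ need not exist, $F$ degenerates, and $F''=1/g'$ blows up. Your proposed remedy --- replace the Casimir by a Burton-type supporting functional on $\mathcal R_{\omega^s}$ --- is not a patch but \emph{is} the paper's entire proof, so the hardest case of the theorem is deferred to the method you set out to avoid. Relatedly, the promotion from $L^2$ to $L^p$ convergence of the vorticity is not ``standard'': in the paper it is exactly where the follower construction via \cite[Lemma~2.3]{BACT}, the uniqueness of the maximizer of the linear functional $L$ on the weak closure $\overline{\mathcal R^w}$, and the uniform convexity of $L^p$ are used. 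I would recommend abandoning the Taylor-expansion framework and arguing as the paper does: establish \eqref{aim} for every $\varrho\in\mathcal R_{\omega^s}-\omega^s$, $\varrho\not\equiv0$, via the convexity inequalities \eqref{texp} and \eqref{hatg} and the inequality \eqref{rk31in}, and then conclude by Theorem~\ref{thmbsc}.
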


\begin{remark}
By \eqref{leq96} and Theorem \ref{thm0}(iii), Theorem \ref{thm1} is an extension of Theorem \ref{asst0}.
\end{remark}
\begin{remark}\label{vpfor1}
According to Proposition \ref{prop21} in Section \ref{Sec2}, the conclusion of Theorem \ref{thm1} can also be stated in terms of the stream function as follows:  {
for any $\varepsilon>0$, there exists some $\delta>0,$ such that for any smooth Euler flow with stream function $\psi$, it holds that
 \[\|\psi(0,\cdot)-\psi^s\|_{W^{2,p}(D)}<\delta\quad\Longrightarrow\quad \|\psi(t,\cdot)-\psi^s\|_{W^{2,p}(D)}<\varepsilon\quad \forall\, t\in\mathbb R.\]
}
\end{remark}

\begin{remark}
The smoothness of the perturbed flow in Theorem \ref{thm1}, as well as in Theorem \ref{thm2} below, can be relaxed. Specifically, it suffices to require that the vorticity-circulation pair of the perturbed flow is an \emph{admissible pair}; see  Definition \ref{defofap}.
\end{remark}


The following example shows that the bound $\bm\Lambda_1$ in \eqref{condg} is actually sharp for circular domains.

\begin{example}[Instability of a non-circular flow in a disk]\label{examins}
  Let $D$ be the unit disk centered at the origin.
Then
\begin{equation}\label{dsk1}
  \bm\Lambda_1=\lambda_2=j_{1,1}^2,\quad
\mathbf E_1={\rm span}\left\{J_0(j_{1,1}r), \, J_1(j_{1,1}r)\sin\theta,\, J_1(j_{1,1}r)\cos\theta\right\},
\end{equation}
where $(r,\theta)$ denotes the polar coordinates,  $J_n$, $ n=0,1,2,\dots$, is the Bessel function of  order $n$ of the first kind, and $j_{1,1}\approx 3.831706$ is the first positive zero of $J_1$; see  \cite[Propositions 3.4]{GL} or \cite[Lemma 2.5]{W24j}.
Consider the steady flow  in $D$ with stream function $\psi^s$ and vorticity $\omega^s$:
\begin{equation}\label{steadyflow}
\psi^s(r,\theta)=\frac{1}{j^2_{1,1}}J_1(j_{1,1} r)\cos\theta,\quad \omega^s(r,\theta)=J_1(j_{1,1} r)\cos\theta.
\end{equation}
Then  $\psi^s$ satisfies
\[-\Delta\psi^s=j_{1,1}^2\psi^s\ {\rm in}\ D,\quad \psi^s|_{\partial D}=0,\quad \int_D\psi^s d\mathbf x=0.\]
One can verify that for any
$n\in\mathbb N_+$,
\[\omega_n(t,r,\theta):=J_1(j_{1,1} r)\cos\left(\theta-\frac{1}{n} t\right)+\frac{2}{n}\]
is a solution to the Euler equation. It is clear that
\[\lim_{n\to\infty}\|\omega_n(0,\cdot)-\omega^s\|_{L^p(D)}= 0,\]
but
\[  \sup_{t\in\mathbb R, \, n\in\mathbb N_+}\|\omega_n(t,\cdot)-\omega^s\|_{L^p(D)}>0.
\]
This shows that the steady flow \eqref{steadyflow} is unstable (indeed, any non-circular steady flow in a disk is unstable; see \cite[Section~6]{W24j}).
\end{example}

Although instability may occur when $g'(\psi^s)$ reaches $\bm\Lambda_1,$ our next theorem indicates that a certain form of structural stability still holds.

\begin{theorem}[Structural stability: $g'\leq \bm\Lambda_1$]\label{thm2}
Suppose that
$\psi^s\in C^{2}(\bar D)$ solves \eqref{semilieq}, where $g\in C^{1}(\mathbb R)$. Suppose that
\begin{equation}\label{condg2}
 0\leq \min_{\bar D} g' (\psi^s)\leq \max_{\bar D}g' (\psi^s)\leq \bm\Lambda_1.
 \end{equation}
Denote by $(\omega^s,\bm\gamma^s)$ the vorticity-circulation pair related to $\psi^s$ as in \eqref{vcp01}.
Then, for any $\varepsilon>0$, there exists some $\delta>0$ such that for any smooth Euler flow
 with vorticity-circulation pair $(\omega,\bm\gamma)$, if
 \[\|\omega(0,\cdot)-\omega^s\|_{L^p(D)}+|\bm\gamma-\bm\gamma^s|<\delta,\]
 then for any $t\in\mathbb R,$ there exists some $\tilde\omega\in (\omega^s+\mathbf E_1)\cap \mathcal R_{\omega^s}$
  such that
 \[ \|\omega(t,\cdot)-\tilde\omega\|_{L^p(D)}<\varepsilon.\]

 \end{theorem}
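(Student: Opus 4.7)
The plan is to recast the problem as a constrained maximization and combine the resulting variational characterization with a compactness statement for near-maximizing sequences, in the spirit of \cite{W22,WZ22,W24j}. Specifically, under \eqref{condg2} I will show that $\omega^s$ maximizes the kinetic energy $E$ over the rearrangement class $\mathcal R_{\omega^s}$ (at fixed circulation $\bm\gamma^s$), with maximizer set contained in
\[
\mathcal M:=(\omega^s+\mathbf E_1)\cap\mathcal R_{\omega^s};
\]
the conservation laws of the Euler flow will then force $\omega(t,\cdot)$ to remain close to $\mathcal M$ whenever the initial data is close to $(\omega^s,\bm\gamma^s)$.

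For the variational part, I would introduce the energy-Casimir functional
\[
\mathcal K(\omega,\bm\gamma):=\tfrac12\int_D|\nabla\psi|^2 d\mathbf x-\int_D F(\omega) d\mathbf x+\sum_{i=1}^N\psi^s|_{\Gamma_i}\gamma_i,
\]
where $F$ is a $C^2$ convex function satisfying $F'(\omega^s)=\psi^s$ and $F''\geq 1/\bm\Lambda_1$ globally on $\mathbb R$ (compatible with \eqref{condg2}, and extended off the range of $\omega^s$ if necessary). The stationarity condition, integration by parts, pointwise convexity of $F$, and Theorem \ref{thm0}(ii) applied to $\phi:=\psi-\psi^s$ (modulo a circulation-correction that vanishes as $\bm\gamma\to\bm\gamma^s$) combine to give
\[
\mathcal K(\omega^s,\bm\gamma^s)-\mathcal K(\omega,\bm\gamma)\;\geq\;\tfrac{1}{2\bm\Lambda_1}\|\omega-\omega^s\|_{L^2}^2-\tfrac{1}{2}\|\nabla\phi\|_{L^2}^2+\mathcal O(|\bm\gamma-\bm\gamma^s|),
\]
with the right-hand side nonnegative in the limit $\bm\gamma\to\bm\gamma^s$ and vanishing exactly when $\phi\in\mathbf E_1$. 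Since $\int_D F(\omega) d\mathbf x$ is constant on $\mathcal R_{\omega^s}$, restricting to that class with circulation $\bm\gamma^s$ identifies $\omega^s$ as a global maximizer of $E$ and confines the maximizer set to $\mathcal M$.

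The compactness step exploits that $E(\omega)=\tfrac12\langle\omega,G\omega\rangle$ with $G$ the compact, strictly positive Green operator, so $E$ is \emph{weakly continuous} and \emph{strictly convex} on $L^p$. For any sequence $\tilde\omega_n\in\mathcal R_{\omega^s}$ with $E(\tilde\omega_n)\to E(\omega^s)$, the identity $\|\tilde\omega_n\|_{L^p}=\|\omega^s\|_{L^p}$ gives weak compactness; weak continuity then yields $E(\omega_*)=\max_{\overline{\mathrm{conv}}(\mathcal R_{\omega^s})}E=E(\omega^s)$; strict convexity together with Bauer's maximum principle forces $\omega_*$ to be an extreme point of $\overline{\mathrm{conv}}(\mathcal R_{\omega^s})$, which by Ryff's theorem lies in $\mathcal R_{\omega^s}$. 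The equality of $L^p$-norms plus uniform convexity of $L^p$ promotes weak to strong convergence, and the variational characterization above places $\omega_*\in\mathcal M$. To conclude, I would argue by contradiction: assume sequences $(\omega_n,\bm\gamma_n)\to(\omega^s,\bm\gamma^s)$ and times $t_n$ such that $\mathrm{dist}_{L^p}(\hat\omega_n,\mathcal M)\geq\varepsilon_0$, where $\hat\omega_n:=\omega_n(t_n,\cdot)$. Isovorticity gives $\hat\omega_n\in\mathcal R_{\omega_n}$, and since $\omega_n^\downarrow\to(\omega^s)^\downarrow$ strongly in $L^p([0,|D|])$ one can produce (via a measure-preserving rearrangement) $\tilde\omega_n\in\mathcal R_{\omega^s}$ with $\|\hat\omega_n-\tilde\omega_n\|_{L^p}\to 0$. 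Conservation of $E$ and $\bm\gamma$ then yields $E(\tilde\omega_n,\bm\gamma^s)\to E(\omega^s,\bm\gamma^s)$, and the compactness statement gives $\tilde\omega_n\to\omega_*\in\mathcal M$ strongly in $L^p$, whence $\hat\omega_n\to\omega_*\in\mathcal M$, contradicting the standing assumption.

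The principal obstacle will be making the variational step genuinely sharp: choosing the convex extension of $F$ so that the chain of inequalities is saturated exactly along $\mathbf E_1$, and controlling the circulation-correction term so that the quadratic bound survives the limit $\bm\gamma\to\bm\gamma^s$ without losing sharpness. In effect, the degenerate direction $\mathbf E_1$ of Theorem \ref{thm0}(ii) is precisely what forces the weakening from pointwise stability (Theorem \ref{thm1}) to the orbital-type structural stability of Theorem \ref{thm2}, and tracking this correspondence cleanly through the isovortical constraint is the technical heart of the argument, for which the frameworks of \cite{W22,WZ22,W24j} supply the essential ingredients.
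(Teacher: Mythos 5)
Your overall strategy is the one the paper follows: show that under \eqref{condg2} the steady vorticity $\omega^s$ maximizes $E(\cdot,\bm\gamma^s)$ over $\mathcal R_{\omega^s}$ with maximizer set $\mathcal M\subset(\omega^s+\mathbf E_1)\cap\mathcal R_{\omega^s}$ (the paper's Proposition 5.2), prove compactness of maximizing sequences via Burton's rearrangement theory, and transport closeness along the flow using conservation of energy, circulation and the isovortical property together with a ``follower'' in $\mathcal R_{\omega^s}$ (the paper's Theorem 4.3). Your compactness argument via strict convexity of $E$ and extreme points of $\overline{\mathcal R^w}$ is a legitimate variant of the paper's, which instead linearizes $E$ around the weak limit and invokes Burton's theorem that a linear functional attains its supremum over $\overline{\mathcal R^w}$ inside $\mathcal R$.

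There is, however, one step in your variational part that fails as stated. You posit a $C^2$ convex $F$ with $F'(\omega^s)=\psi^s$ pointwise and $F''\ge 1/\bm\Lambda_1$ everywhere. Since \eqref{condg2} permits $\min_{\bar D}g'(\psi^s)=0$, the function $g$ need not be injective on the range of $\psi^s$; then $\omega^s=g(\psi^s)$ can take the same value at points where $\psi^s$ takes different values, so no single-valued $F'$ with $F'(\omega^s)=\psi^s$ exists, let alone a strictly increasing one (which $F''\ge 1/\bm\Lambda_1>0$ would force). The paper avoids this by never inverting $g$: it works with the antiderivative $G$ of a suitably extended $g$ and its Legendre transform $\hat G$, which is only locally Lipschitz, and uses the Fenchel--Young inequality $\hat G(s)+G(\tau)\ge s\tau$ with equality iff $s=g(\tau)$, together with $G(s+\tau)\le G(s)+g(s)\tau+\tfrac12\bm\Lambda_1\tau^2$ and the constancy of $\int_D\hat G(v)\,d\mathbf x$ on $\mathcal R_{\omega^s}$. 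A second, smaller point: carrying an $\mathcal O(|\bm\gamma-\bm\gamma^s|)$ correction through the sharp inequality is delicate precisely because $\psi-\psi^s\notin\mathbf Y$ when $\bm\gamma\ne\bm\gamma^s$; the paper sidesteps this by proving the variational inequality only at the fixed circulation $\bm\gamma^s$ (so that $\mathsf T\varrho\in\mathbf Y$ exactly and Theorem \ref{thm0}(ii) applies with its equality case) and absorbing the circulation perturbation separately through the uniform continuity of $E$ in $\bm\gamma$ (Lemma 4.1(ii)).
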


\begin{remark}\label{vpfor2}
The conclusion of Theorem \ref{thm2} can also be stated in terms of the stream function as follows:  {
for any $\varepsilon>0$, there exists some $\delta>0,$ such that for any  smooth Euler flow with stream function $\psi$, if
 \[\|\psi(0,\cdot)-\psi^s\|_{W^{2,p}(D)}<\delta,\]
 then for any $t\in\mathbb R$, there exists some $\tilde\psi$ satisfying
 \[\tilde\psi-\psi^s\in\mathbf E_1,\quad -\Delta\tilde\psi\sim-\Delta\psi^s,\]
 such that
 \[  \|\psi(t,\cdot)-\tilde\psi\|_{W^{2,p}(D)}<\varepsilon.\]
 }

\end{remark}

 \begin{remark}\label{vpfor4}
 Suppose that $\omega^s\in\mathcal S\subset (\omega^s+\mathbf E_1)\cap \mathcal R_{\omega^s}$, and that $\mathcal S$ is isolated in $(\omega^s+\mathbf E_1)\cap \mathcal R_{\omega^s}$, i.e.,
 \[\min\left\{\|v_1-v_2\|_{L^p(D)} \;\middle|\;  v_1\in\mathcal S,\, v_2\in \left((\omega^s+\mathbf E_1)\cap \mathcal R_{\omega^s}\right)\setminus\mathcal S\right\}>0.\]
 Then by a standard continuity argument, the conclusion of Theorem \ref{thm2} can  be improved  as follows:  {for any $\varepsilon>0$, there exists some $\delta>0,$ such that for any smooth Euler flow
 with vorticity-circulation pair $(\omega,\bm\gamma)$, if
 \[\|\omega(0,\cdot)-\omega^s\|_{L^p(D)}+|\bm\gamma-\bm\gamma^s|<\delta,\]
 then for any $t\in\mathbb R,$ there exists some $\tilde\omega\in \mathcal S$
  such that
 \[ \|\omega(t,\cdot)-\tilde\omega\|_{L^p(D)}<\varepsilon.\]
 }

 \end{remark}

  An interesting problem is to study the structure of the set
$(\omega^s+\mathbf E_1)\cap \mathcal R_{\omega^s}$.
 Suppose that $\mathbf E_1$ has an orthogonal (with respect to $L^2$-norm) basis $\{\Phi_1,\dots,\Phi_d\}.$
 Denote
 \[K:=\left\{(\alpha_1,\dots,\alpha_d)\in\mathbb R^d \;\middle|\; \left(\omega^s+\sum_{i=1}^d\alpha_i\Phi_i\right)\sim \omega^s\right\}\]
Then the problem reduces to studying the set $K$. Since $(\omega^s+\mathbf E_1)\cap \mathcal R_{\omega^s}$ is obviously compact in $L^p(D)$, we deduce that $K$ is compact in $\mathbb R^d$. To obtain more information, notice that  any $(\alpha_1,\dots,\alpha_d)\in K$ satisfies
\begin{equation}\label{intoff}
 \int_DF\left(\omega^s+\sum_{i=1}^d\alpha_i \Phi_i \right) d\mathbf x=\int_DF(\omega^s) d\mathbf x,
 \end{equation}
where $F$ can be any continuous function.
 By taking various functions $F$, it is theoretically possible to determine the coefficients $\alpha_1,\dots,\alpha_d$.

When the domain $D$ has certain symmetries, the Euler equations often possess additional conserved quantities, which can be very helpful in analyzing stability; see \cite{CG,CWZ,GS,WZ23}.
Below, we consider the case where $D$ is the unit  disk centered at the origin. In this case, the moment of inertia  of the vorticity $I$ is conserved,
\begin{equation}\label{defofj}
I(\omega):=\int_D|\mathbf x|^2\omega d\mathbf x.
\end{equation}
  With the help of $I$ and the rotational symmetry of the disk, we can improve Theorems \ref{thm1} and \ref{thm2} as follows.

\begin{theorem}[Rigidity and orbital stability in a disk]\label{thm3}
Let $D$ be the unit disk centered at the origin. Suppose that $\psi^s\in C^2(\bar D)$ satisfies
 \[
\begin{cases}
-\Delta  \psi^s=g(\psi^s)\quad \mbox{in }\, D,\\
\psi^s \mbox{ is  constant on } \partial D,
\end{cases}
\]
where $g\in C^1(\mathbb R)$. Denote $\omega^s=-\Delta\psi^s.$
 Fix $1<p<\infty$.
  \begin{itemize}
  \item [(i)]  If
\[
 0\leq \min_{\bar D} g' (\psi^s)\leq \max_{\bar D}g' (\psi^s)<j_{1,1}^2,
\]
where $j_{1,1}$ is given in Example \ref{examins}, then $\omega^s$ is radial, and the associated steady flow  is stable in the following sense: for any $\varepsilon>0$, there exists some $\delta>0,$ such that for any smooth Euler flow with vorticity $\omega$, it holds that
 \[\|\omega(0,\cdot)-\omega^s\|_{L^p(D)}<\delta\quad\Longrightarrow\quad \|\omega(t,\cdot)-\omega^s\|_{L^p(D)}<\varepsilon\quad \forall\, t\in\mathbb R.\]
   \item [(ii)]  If
\[
 0\leq \min_{\bar D} g' (\psi^s)\leq \max_{\bar D}g' (\psi^s)\leq j_{1,1}^2,
\]
then $\omega^s$ can be decomposed as
\[\omega^s=\omega^r+\omega^e,\]
where $\omega^r$ is radial  and $\omega^e\in\mathbf E_1,$ and  the associated flow is orbitally stable up to rigid rotation, i.e., for any $\varepsilon>0$, there exists some $\delta>0,$ such that for any smooth Euler flow with vorticity $\omega$, it holds that
 \[\|\omega(0,\cdot)-\omega^s\|_{L^p(D)}<\delta\quad\Longrightarrow\quad \min_{v\in\mathcal O_{\omega^s}}\|\omega(t,\cdot)-v\|_{L^p(D)}<\varepsilon\quad \forall\, t\in\mathbb R,\]
 where $\mathcal O_{\omega^s}$ denotes the rotational orbit of $\omega^s$, i.e.,
 \[\mathcal O_{\omega^s}:=\left\{\omega^s(r,\theta+\alpha)\;\middle|\;  \alpha\in\mathbb R\right\}.\]
\end{itemize}
\end{theorem}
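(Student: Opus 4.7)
The proof of Theorem \ref{thm3} splits naturally into a \emph{rigidity} step, which constrains the structural form of $\omega^s$, and a \emph{stability} step, which invokes Theorem \ref{thm1} or Theorem \ref{thm2} and then upgrades their conclusions using the rotationally invariant integral $I(\omega)=\int_D|\mathbf x|^2\omega\, d\mathbf x$.

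\textbf{Rigidity.} The common starting point for (i) and (ii) is to differentiate $-\Delta\psi^s=g(\psi^s)$ with respect to $\theta$. Writing $v:=\partial_\theta\psi^s$, one obtains
\[
-\Delta v=g'(\psi^s)v\quad\text{in}\ D,\qquad v|_{\partial D}=0,
\]
where the boundary condition follows from the constancy of $\psi^s$ on the circle $\partial D$. Testing against $v$ yields $\int_D|\nabla v|^2\, d\mathbf x=\int_D g'(\psi^s)v^2\, d\mathbf x$. On the other hand, a Fourier expansion in $\theta$ shows that $v$ has no axisymmetric mode, so on this subspace of $H^1_0(D)$ the sharp Dirichlet Poincar\'e inequality reads $\int_D|\nabla v|^2\, d\mathbf x\ge j_{1,1}^2\int_D v^2\, d\mathbf x$, with equality precisely when $v\in\mathrm{span}\{J_1(j_{1,1}r)\cos\theta,\,J_1(j_{1,1}r)\sin\theta\}$. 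In case (i), where $\max g'(\psi^s)<j_{1,1}^2$, this forces $v\equiv 0$, so $\psi^s$ and hence $\omega^s$ is radial. In case (ii) equality is possible, and integrating the admissible form of $v$ in $\theta$ gives $\psi^s=f(r)+aJ_1(j_{1,1}r)\sin\theta-bJ_1(j_{1,1}r)\cos\theta$, whence $\omega^s=\omega^r+\omega^e$ with $\omega^r=-\Delta f(r)$ radial and $\omega^e\in\mathbf E_1$.

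\textbf{Stability for (i).} With $\omega^s$ radial, I shift $\psi^s$ by a constant so that it has zero mean (which does not affect $g'(\psi^s)$) and apply Theorem \ref{thm1}; since $D$ is simply connected, the circulation vector is absent.

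\textbf{Stability for (ii).} I apply Theorem \ref{thm2}: for any $\eta>0$ and sufficiently small initial perturbation, $\|\omega(t,\cdot)-\tilde\omega_t\|_{L^p(D)}<\eta$ for some $\tilde\omega_t\in(\omega^s+\mathbf E_1)\cap\mathcal R_{\omega^s}$. The goal is to replace $\tilde\omega_t$ by an element of the rotational orbit $\mathcal O_{\omega^s}$. A generic element of $\omega^s+\mathbf E_1$ reads
\[
\omega^s+\alpha J_0(j_{1,1}r)+\beta J_1(j_{1,1}r)\cos\theta+\gamma J_1(j_{1,1}r)\sin\theta.
\]
The two $J_1$ modes contribute nothing to $I$ by $\theta$-averaging, whereas a short Bessel computation using $J_1(j_{1,1})=0$ and the recurrence $J_0(x)+J_2(x)=\tfrac{2}{x}J_1(x)$ gives $I(J_0(j_{1,1}r))=\tfrac{4\pi J_0(j_{1,1})}{j_{1,1}^2}\ne 0$. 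Hence the conserved constraint $I(\omega)=I(\omega^s)$ forces $\alpha=0$, confining perturbations to the dipole plane; on this plane, the Casimir $\int\omega^2\, d\mathbf x$ pins $\sqrt{\beta^2+\gamma^2}$ to equal the amplitude of $\omega^e$, proving
\[
(\omega^s+\mathbf E_1)\cap\mathcal R_{\omega^s}\cap\{I=I(\omega^s)\}=\mathcal O_{\omega^s}.
\]
A continuity argument on the $L^p$-compact set $(\omega^s+\mathbf E_1)\cap\mathcal R_{\omega^s}$ then gives: for every $\eta>0$ there exists $\eta'>0$ such that $|I(\omega')-I(\omega^s)|<\eta'$ implies $\mathrm{dist}_{L^p}(\omega',\mathcal O_{\omega^s})<\eta$. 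Combining conservation of $I$ with the H\"older estimate $|I(\omega(t))-I(\tilde\omega_t)|\le C\|\omega(t)-\tilde\omega_t\|_{L^p}$ and the initial bound $|I(\omega(0))-I(\omega^s)|\le C\delta$ closes the argument upon choosing $\delta$ (hence $\eta$) small.

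The substantive step is the algebraic identity $(\omega^s+\mathbf E_1)\cap\mathcal R_{\omega^s}\cap\{I=I(\omega^s)\}=\mathcal O_{\omega^s}$: it rests on the non-vanishing of the specific Bessel moment $\int_0^1 r^3J_0(j_{1,1}r)\, dr$ together with the fact that equimeasurability, through $\int\omega^2$, suffices to fix the remaining dipole amplitude and rule out spurious rearrangements. Once this is in place, the passage from Theorem \ref{thm2} to the final orbital stability statement is a soft compactness/continuity argument.
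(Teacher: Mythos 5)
Your proposal is correct, and for the stability half it follows essentially the same route as the paper: part (i) is Theorem \ref{thm1} applied with $\bm\Lambda_1=j_{1,1}^2$, and part (ii) combines Theorem \ref{thm2} with conservation of $I$ and the set identity $(\omega^s+\mathbf E_1)\cap\mathcal R_{\omega^s}\cap\{I=I(\omega^s)\}=\mathcal O_{\omega^s}$, whose proof (the nonvanishing moment $\int_0^1 r^3J_0(j_{1,1}r)\,dr=2J_0(j_{1,1})/j_{1,1}^2\neq 0$ killing the $J_0$ direction, then the $L^2$ Casimir fixing the dipole amplitude) is exactly the paper's Lemma \ref{lm663}; your quantitative continuity estimate is an equivalent packaging of the sequential argument in Lemma \ref{lm662}. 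Where you genuinely diverge is the rigidity step. The paper never linearizes the PDE: it observes that every rotation of $\omega^s$ is again a maximizer of $E$ over $\mathcal R_{\omega^s}$, so in case (i) uniqueness of the maximizer (Proposition \ref{propsingleton}) forces $\omega^s$ to coincide with all its rotations, while in case (ii) Proposition \ref{propm2} gives $\partial_\theta\omega^s\in\mathbf E_1$, after which $2\pi$-periodicity kills the $J_0(j_{1,1}r)\theta$ term. You instead differentiate $-\Delta\psi^s=g(\psi^s)$ in $\theta$ and test $-\Delta v=g'(\psi^s)v$, $v=\partial_\theta\psi^s\in H^1_0(D)$, against $v$, using the sharp Poincar\'e inequality $\int_D|\nabla v|^2\,d\mathbf x\ge j_{1,1}^2\int_D v^2\,d\mathbf x$ on the zero-angular-average subspace. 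This is a valid and more elementary argument; it makes the rigidity claim independent of the variational machinery of Section \ref{Sec5}, at the cost of two standard facts that should be recorded explicitly: elliptic regularity upgrading $\psi^s$ to $C^3$ so that the differentiated equation holds (or a weak-formulation workaround), and the identification of $j_{1,1}^2$ as the bottom of the Dirichlet spectrum of the disk restricted to the nonaxisymmetric sector, with equality exactly on the $n=1$ modes. One small imprecision in your part (ii): the $L^2$ constraint pins the amplitude of the \emph{total} dipole $\omega^e+\beta J_1(j_{1,1}r)\cos\theta+\gamma J_1(j_{1,1}r)\sin\theta$ to that of $\omega^e$, not $\sqrt{\beta^2+\gamma^2}$ itself; the set identity you draw from it is nevertheless correct.
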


The above theorem extends Theorem 1.2 in \cite{W24j}, which asserts that any eigenstate in $\mathbf{E}_1$, as given in \eqref{dsk1}, is orbitally stable up to rigid rotation. The proof in \cite{W24j} is achieved by studying the equimeasurable partition of $\mathbf{E}_1$, which is rather technical. In this paper, by using the conservation of $I$, the analysis is significantly simplified.

This paper is organized as follows. In Section \ref{Sec2}, we investigate an elliptic problem related to the vorticity-circulation formulation of the Euler equations. In Section \ref{Sec3}, we study the Laplacian eigenvalue problem \eqref{lepzm} and give the proof of Theorem \ref{thm0}. In Section \ref{Sec4}, we prove a Burton-type stability criterion that is used in subsequent sections. Section \ref{Sec5} is devoted to the proofs of Theorems \ref{thm1} and \ref{thm2}, and Section \ref{Sec6} is devoted to the proof of Theorem \ref{thm3}.

\section{An elliptic problem}\label{Sec2}
 The aim of this section is to prove the following proposition.
\begin{proposition}\label{prop21}
For any  $v\in L^p(D)$ and any $\bm\gamma=(\gamma_1,\dots,\gamma_N)\in\mathbb R^{N}$, there is a unique solution $u\in W^{2,p}(D)$ to the following elliptic problem:
\begin{equation}\label{psieq9i}
\begin{cases}
-\Delta u=v,&\mathbf x\in D,\\
u|_{\Gamma_{i}}={\rm constant},&i= 0,1,\dots,N,\\
\int_{\Gamma_{i}}\nabla u\cdot \mathbf n dS=-\gamma_i,&i= 1,\dots,N,\\
\int_Du d\mathbf x=0.
\end{cases}
 \end{equation}
 Moreover, there exist two positive constants  $c,C$, depending only on $p$ and $D$, such that
 \begin{equation}\label{cces}
 c\|u\|_{W^{2,p}(D)}\leq \|v\|_{L^p(D)}+|\bm\gamma|\leq C\|u\|_{W^{2,p}(D)},
 \end{equation}
where $|\bm\gamma|:=\sqrt{\sum_{i=1}^N\gamma_i^2}$.

\end{proposition}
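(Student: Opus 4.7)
The strategy is to reduce \eqref{psieq9i} to a Dirichlet problem plus a finite-dimensional linear system encoding the circulation conditions and the mean-zero normalization. As a first step, I would solve the auxiliary Dirichlet problem $-\Delta u_0 = v$ in $D$, $u_0 = 0$ on $\partial D$; classical $L^p$ elliptic regularity yields a unique $u_0 \in W^{2,p}(D) \cap W^{1,p}_0(D)$ with $\|u_0\|_{W^{2,p}(D)} \leq C\|v\|_{L^p(D)}$. Next, for each $i \in \{1,\dots,N\}$ introduce the harmonic measure $h_i \in C^\infty(\bar D)$ characterized by $\Delta h_i = 0$ in $D$ and $h_i|_{\Gamma_j} = \delta_{ij}$ for $j = 0,1,\dots,N$; each $h_i$ depends only on $D$, so its $W^{2,p}$-norm is an absolute constant. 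The ansatz
\begin{equation*}
u \;=\; u_0 + \sum_{i=1}^N \mu_i h_i + c,
\end{equation*}
with scalars $\mu_1,\dots,\mu_N$ and $c$ to be determined, automatically satisfies $-\Delta u = v$ and the constant-on-each-component boundary condition, since $u|_{\Gamma_0} = c$ and $u|_{\Gamma_j} = \mu_j + c$ for $j \geq 1$.

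It remains to impose the $N$ circulation conditions and the one mean-zero condition, matching exactly the $N+1$ free scalars. Substituting the ansatz, the circulation conditions become the linear system
\begin{equation*}
\sum_{j=1}^N A_{ij}\,\mu_j \;=\; -\gamma_i - \int_{\Gamma_i} \partial_{\mathbf n} u_0 \, dS, \qquad A_{ij} \;:=\; \int_{\Gamma_i} \partial_{\mathbf n} h_j \, dS, \quad i = 1,\dots,N.
\end{equation*}
Green's identity, together with $\Delta h_j = 0$ and the fact that $h_i$ vanishes on $\Gamma_0$ and equals $\delta_{ik}$ on $\Gamma_k$ for $k \geq 1$, gives the key identity $A_{ij} = \int_D \nabla h_i \cdot \nabla h_j\,d\mathbf x$. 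Thus $A$ is symmetric; and if $\sum_{i,j}\xi_i\xi_j A_{ij} = \int_D \bigl|\nabla\bigl(\sum_i \xi_i h_i\bigr)\bigr|^2\,d\mathbf x = 0$, then $\sum_i \xi_i h_i$ is constant on $D$, but its values on $\Gamma_1,\dots,\Gamma_N$ are $\xi_1,\dots,\xi_N$ while on $\Gamma_0$ it equals $0$, forcing $\xi = 0$. Hence $A$ is positive definite, the system has a unique solution $\bm\mu$, and the boundedness of $A^{-1}$ combined with the trace estimate $|\int_{\Gamma_i}\partial_{\mathbf n} u_0\,dS| \leq C\|u_0\|_{W^{2,p}(D)}$ yields $|\bm\mu| \leq C(|\bm\gamma| + \|v\|_{L^p(D)})$. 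The condition $\int_D u\,d\mathbf x = 0$ then fixes $c$ uniquely with the analogous bound $|c| \leq C(|\bm\gamma| + \|v\|_{L^p(D)})$.

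Uniqueness can be settled directly: any two solutions of \eqref{psieq9i} differ by some $w \in W^{2,p}(D)$ with $\Delta w = 0$, $w|_{\Gamma_i}$ constant for every $i$, $\int_{\Gamma_i}\partial_{\mathbf n} w\,dS = 0$ for $i \geq 1$, and $\int_D w\,d\mathbf x = 0$. Integration by parts gives $\int_D|\nabla w|^2\,d\mathbf x = \sum_{i=0}^N (w|_{\Gamma_i})\int_{\Gamma_i}\partial_{\mathbf n} w\,dS$; the terms with $i \geq 1$ vanish by hypothesis, while the $i = 0$ term vanishes because the divergence theorem applied to $\nabla w$ forces $\int_{\Gamma_0}\partial_{\mathbf n} w\,dS = -\sum_{i\geq 1}\int_{\Gamma_i}\partial_{\mathbf n} w\,dS = 0$. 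Hence $w$ is constant, and the mean-zero condition yields $w \equiv 0$.

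The two-sided estimate \eqref{cces} is then essentially free: the upper bound on $\|u\|_{W^{2,p}(D)}$ in terms of $\|v\|_{L^p(D)} + |\bm\gamma|$ is obtained by combining the individual $W^{2,p}$-bounds on $u_0$, $\bm\mu$, and $c$, while the reverse bound uses $\|v\|_{L^p(D)} = \|\Delta u\|_{L^p(D)} \leq \|u\|_{W^{2,p}(D)}$ together with $|\gamma_i| \leq \|\nabla u\|_{L^1(\Gamma_i)} \leq C\|u\|_{W^{2,p}(D)}$ by the trace theorem. The only non-routine step in the whole argument is the invertibility, with a quantitative inverse bound, of the circulation matrix $A$; this is where the multiply-connected topology of $D$ genuinely enters, and I expect it to be the main, though quite manageable, point.
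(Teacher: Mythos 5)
Your proposal is correct and follows essentially the same route as the paper: the paper's proof sets $u=\mathsf Pv+h_{\bm\gamma}-\mathsf A_{\mathsf Pv+h_{\bm\gamma}}$, where $\mathsf P$ and $h_{\bm\gamma}$ are built (in the cited Lemma from \cite{WZ22}) from exactly your Dirichlet solution $u_0=\mathsf Gv$, your harmonic measures $h_i=\zeta_i$, and the inverse of your Gram matrix $A_{ij}=p_{ij}=\int_D\nabla\zeta_i\cdot\nabla\zeta_j\,d\mathbf x$, with the same positive-definiteness argument and the same integration-by-parts uniqueness. The only difference is that you carry out the construction self-containedly where the paper cites \cite{WZ22}.
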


To prove Proposition \ref{prop21}, we recall some notation and facts introduced in \cite{WZ22}.
Let  $\mathsf G$ be the inverse of $-\Delta$ subject to the zero Dirichlet boundary condition,
i.e., $u:=\mathsf Gv$ is the unique solution to
\begin{equation}\label{dfgop}
\begin{cases}
-\Delta u=v,&\mathbf x\in D,\\
 u|_{\partial D}=0.
 \end{cases}
 \end{equation}
It is well known that $\mathsf G$ is a bounded, linear operator mapping $L^p(D)$ into $W^{2,p}(D)$.  Let  $\zeta_i,$ $i=0,1,\dots,N,$ be the  harmonic function in $D$ determined by
\begin{equation}\label{zeteq}
 \Delta \zeta_{i}=0 \,\,\,\mbox{in}\,\,\, D,\quad
\zeta_{i} =
\begin{cases}
1\,\,\,\mbox{on }\Gamma_i,\\
0\,\,\,\mbox{on }\partial D\setminus \Gamma_i.
\end{cases}
\end{equation}
Denote
\begin{equation}\label{pijdef}
p_{ij}:=\int_{D}\nabla\zeta_{i}\cdot\nabla\zeta_{j}dx,\quad 1\leq i,j\leq N.
\end{equation}
One can check that the matrix $(p_{ij})_{1\leq i,j\leq N}$ is symmetric and positive definite.
Let $(q_{ij})_{1\leq i,j\leq N}$ be the inverse of  $(p_{ij})_{1\leq i,j\leq N}.$
 Define
\begin{equation}\label{pvha1}
 \mathsf Pv:=\mathsf Gv+\sum_{i,j=1}^{N} q_{ij}\left(\int_{D}v\zeta_{i} d\mathbf x\right)\zeta_j,\quad h_{\bm\gamma}:= -\sum_{i,j=1}^{N}q_{ij}\gamma_{i}\zeta_{j}.
\end{equation}
Note that only $\zeta_0$ will not be used until the proof of Lemma \ref{lemmp1}.

\begin{lemma}[{\cite[Section 2]{WZ22}}]\label{lm20}
\begin{itemize}
\item[(i)] $\mathsf P$ is a bounded, linear operator mapping  $L^{p}(D)$ into $W^{2,p}(D).$
\item[(ii)] $\mathsf P$ is symmetric and positive definite,  i.e.,
   \begin{equation}\label{psym}
  \int_{D}v_{1}\mathsf Pv_{2}d\mathbf x=\int_{D}v_{2}\mathsf Pv_{1}d\mathbf x\quad\forall\, v_1,v_2\in L^p(D),
 \end{equation}
  and
    \begin{equation}\label{ppdefy}
  \int_{D}v\mathsf Pv d\mathbf x\geq 0\quad \forall\, v\in L^p(D),
  \end{equation}
  and \eqref{ppdefy} is an equality if and only if $v=0.$
\item[(iii)] Fix $v\in L^p(D)$ and $\bm\gamma=(\gamma_1,\dots,\gamma_N)\in\mathbb R^N.$ Then $ u:=\mathsf Pv+h_{\bm\gamma}$ is the unique solution in $W^{2,p}(D)$ to the following elliptic problem:
\begin{equation}\label{psieq9}
\begin{cases}
-\Delta   u=v,&\mathbf x\in D,\\
 u|_{\Gamma_0}=0,\\
 u|_{\Gamma_{i}}=c_i,&i= 1,\dots,N,\\
\int_{\Gamma_{i}}\nabla u\cdot \mathbf n dS=-\gamma_i,&i= 1,\dots,N.
\end{cases}
 \end{equation}
\end{itemize}
\end{lemma}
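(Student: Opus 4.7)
The plan is to reduce Proposition \ref{prop21} to Lemma \ref{lm20}(iii) by a simple normalization: the boundary condition "$u|_{\Gamma_0}=0$" in \eqref{psieq9} and the zero-mean condition "$\int_D u\, d\mathbf x=0$" in \eqref{psieq9i} differ only by an additive constant (neither the Laplacian, nor the piecewise-constant character on $\partial D$, nor the boundary fluxes in \eqref{psieq9} changes when one adds a constant). So, given $v\in L^p(D)$ and $\bm\gamma\in\mathbb R^N$, I would let $\tilde u := \mathsf Pv + h_{\bm\gamma} \in W^{2,p}(D)$ be the solution produced by Lemma \ref{lm20}(iii), define
\[
c^\ast := \frac{1}{|D|}\int_D \tilde u \, d\mathbf x, \qquad u := \tilde u - c^\ast,
\]
and verify directly that $u\in W^{2,p}(D)$ solves \eqref{psieq9i}.

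For uniqueness, suppose $u_1,u_2\in W^{2,p}(D)$ both solve \eqref{psieq9i}; then $w:=u_1-u_2$ is harmonic in $D$, constant on each $\Gamma_i$, with zero flux through each inner boundary and zero mean. Using the basis $\{\zeta_0,\ldots,\zeta_N\}$ introduced in \eqref{zeteq} together with the identity $\sum_{i=0}^N \zeta_i\equiv 1$, I would write $w = c_0 + \sum_{i=1}^N (c_i-c_0)\zeta_i$. The vanishing-flux conditions then translate, after Green's identity, into
\[
\sum_{j=1}^N (c_j-c_0)\,p_{ij} = 0, \qquad i=1,\ldots,N,
\]
where $(p_{ij})$ is the symmetric positive-definite matrix from \eqref{pijdef}; invertibility forces $c_j=c_0$ for $j=1,\ldots,N$, so $w\equiv c_0$, and the zero-mean condition then gives $c_0=0$.

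For the two-sided estimate \eqref{cces}, the upper bound $\|v\|_{L^p(D)}+|\bm\gamma|\le C\|u\|_{W^{2,p}(D)}$ is immediate: $\|v\|_{L^p(D)}=\|\Delta u\|_{L^p(D)}\le\|u\|_{W^{2,p}(D)}$, and each $\gamma_i=-\int_{\Gamma_i}\partial u/\partial\mathbf n\,dS$ is controlled by the standard trace inequality $\|\nabla u\|_{L^p(\partial D)}\le C\|u\|_{W^{2,p}(D)}$. For the lower bound, I would estimate $u=\mathsf Pv+h_{\bm\gamma}-c^\ast$ term by term: Lemma \ref{lm20}(i) bounds $\|\mathsf Pv\|_{W^{2,p}(D)}\le C\|v\|_{L^p(D)}$; the finite sum defining $h_{\bm\gamma}$ in \eqref{pvha1} yields $\|h_{\bm\gamma}\|_{W^{2,p}(D)}\le C|\bm\gamma|$; and $\|c^\ast\|_{W^{2,p}(D)}=|D|^{1/p}|c^\ast|\le C(\|\mathsf Pv\|_{L^p(D)}+\|h_{\bm\gamma}\|_{L^p(D)})$, which is already controlled by the two previous bounds.

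I expect the main (mild) obstacle to be the uniqueness argument: translating the vanishing-flux conditions into an invertible linear system via the basis $\{\zeta_i\}$ and the matrix $(p_{ij})$ is the only place where the multiply-connected topology of $D$ genuinely enters, and it requires a little care with the identity $\sum_i\zeta_i\equiv 1$ and the sign conventions in Green's identity. Everything else is bookkeeping on top of Lemma \ref{lm20}.
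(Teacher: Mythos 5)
Your proposal does not prove the assigned statement. What you are asked to establish is Lemma \ref{lm20} itself: that $\mathsf P$ is bounded from $L^p(D)$ into $W^{2,p}(D)$, that it is symmetric and positive definite, and that $\mathsf Pv+h_{\bm\gamma}$ is the unique $W^{2,p}$ solution of \eqref{psieq9}. Your argument instead \emph{assumes} all of this --- you invoke item (iii) to produce $\tilde u:=\mathsf Pv+h_{\bm\gamma}$ and item (i) for the bound $\|\mathsf Pv\|_{W^{2,p}(D)}\le C\|v\|_{L^p(D)}$ --- and from it deduces Proposition \ref{prop21}. That is circular relative to the target: none of (i), (ii), (iii) is ever established, and item (ii), the symmetry and positive definiteness of $\mathsf P$ on which the energy arguments of Sections \ref{Sec4} and \ref{Sec5} actually rest, is never even mentioned. (The paper itself imports Lemma \ref{lm20} from \cite[Section 2]{WZ22} without proof, so there is no in-paper argument to compare against; a proof would have to work directly from the definition \eqref{pvha1}: item (i) from Calder\'on--Zygmund regularity for $\mathsf G$ together with the smoothness of the harmonic functions $\zeta_j$; item (ii) from the symmetry of the Dirichlet Green function, the identity $\int_D v\,\mathsf Gv\,d\mathbf x=\int_D|\nabla\mathsf Gv|^2\,d\mathbf x$, and the symmetry and positive definiteness of $(q_{ij})=(p_{ij})^{-1}$; item (iii) by verifying the boundary values and the flux conditions via $\int_{\Gamma_k}\partial_{\mathbf n}\zeta_j\,dS=p_{kj}$ and $\int_{\Gamma_k}\partial_{\mathbf n}\mathsf Gv\,dS=-\int_D v\zeta_k\,d\mathbf x$.)

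Read instead as a proof of Proposition \ref{prop21}, your argument is essentially correct and coincides with the paper's: the existence part is exactly the normalization $u=\mathsf Pv+h_{\bm\gamma}-\mathsf A_{\mathsf Pv+h_{\bm\gamma}}$, and the estimate \eqref{cces} follows as you describe. Your uniqueness argument (expanding the harmonic difference in the basis $\{\zeta_i\}$ and using invertibility of $(p_{ij})$) is a correct, if more elaborate, version of the paper's one-line integration by parts: for the difference $w$ one has $\int_D|\nabla w|^2\,d\mathbf x=\sum_{i}w|_{\Gamma_i}\int_{\Gamma_i}\partial_{\mathbf n}w\,dS=0$, whence $w$ is constant and the zero-mean condition forces $w\equiv 0$. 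But none of this discharges Lemma \ref{lm20}; you need to go back and prove the three items of that lemma from the definitions of $\mathsf G$, $\zeta_j$, $p_{ij}$, $q_{ij}$, and $h_{\bm\gamma}$.
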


Now we are ready to give the proof of Proposition \ref{prop21}.  For convenience, we use \(\mathsf A_v\) to denote the integral average of \(v \in L^1(D)\) over \(D\), i.e.,
\[
\mathsf A_v = \frac{1}{|D|} \int_D v  d\mathbf x.
\]

\begin{proof}[Proof of Proposition \ref{prop21}]
By items (i) and (iii) of Lemma \ref{lm20}, it is easy to see that
\[u:=\mathsf Pv+h_{\bm\gamma}-\mathsf A_{\mathsf Pv+h_{\bm\gamma}},\]
solves \eqref{psieq9i} and satisfies the estimate \eqref{cces}. On the other hand,   the elliptic problem \eqref{psieq9i} has at most one solution in $W^{2,p}(D)$ by integration by parts.
\end{proof}

\section{Proof of Theorem \ref{thm0}}\label{Sec3}

We begin by reformulating \eqref{lepzm} as an operator eigenvalue problem. Define $\mathsf T:\mathring L^2(D)\mapsto \mathring L^2(D)$ by setting
 \[\mathsf Tv=\mathsf Pv-\mathsf A_{\mathsf Pv},\quad v\in\mathring L^2(D),\]
 where $\mathsf P$ is given by \eqref{pvha1}, and
 \[\mathring L^2(D):=\left\{v\in L^2(D) \;\middle|\; \mathsf A_v=0\right\}.\]
By Lemma \ref{lm20}(iii),  for any $v\in\mathring L^2(D)$,  $u:=\mathsf Tv$ is the unique solution to
\begin{equation}\label{psieq10}
\begin{cases}
-\Delta u=v,&\mathbf x\in D,\\
u\in\mathbf Y.
\end{cases}
 \end{equation}
 In other words, $\mathsf T$ is a bijective mapping $\mathring L^2(D)$ onto $\mathbf Y.$
Hence, we can rewrite \eqref{lepzm} as
  \begin{equation}\label{oepbm}
  v=\bm\Lambda \mathsf Tv,\quad v\in\mathring L^2(D).
  \end{equation}

 \begin{proof}[Proof of Theorem \ref{thm0}(i)(ii)]
 In view of Lemma \ref{lm20}, $\mathcal{T}$ is a compact, symmetric, and positive-definite operator mapping $\mathring{L}^2(D)$ into itself.
Item~(i) then follows directly from the Hilbert--Schmidt theorem.
Note that the positive-definiteness of $\mathcal{T}$ ensures that $\bm{\Lambda}_1 > 0$.
Item~(ii) can be proved by the method of eigenfunction expansion.
Since the argument is quite standard (see, for example, \cite[Section~6.5]{LCE}), we omit the detailed proof.
 \end{proof}

\begin{remark}\label{rk31}
Since $\mathsf T:\mathring L^2(D)\mapsto \mathbf Y$ is bijective, Theorem \ref{thm0}(ii) can also be  stated  as follows:  \emph{for any $v\in\mathring L^2(D)$, it holds that
\begin{equation}\label{rk31in}
\bm\Lambda_1\int_D(\mathsf Tv)^2d\mathbf x\leq \int_D |\nabla \mathsf T v|^2d\mathbf x,
\end{equation}
and the  equality holds if and only if $v\in\mathbf E_1.$}
\end{remark}

\begin{remark}
Theorem \ref{thm0}(ii)  implies another inequality:  \emph{for any $u\in\mathbf Y$, it holds that
\begin{equation}\label{ineq212}
\int_D |\nabla u|^2d\mathbf x\leq \frac{1}{\bm\Lambda_1} \int_D(\Delta u)^2d\mathbf  x,
\end{equation}
and the  equality holds if and only if $u\in\mathbf E_1.$}
To see this,  denote $v=-\Delta u$. Then $u=\mathsf Tv$. By a straightforward computation,
\begin{align*}
      \frac{1}{\bm\Lambda_1}\int_D (\Delta u)^2 d\mathbf x -\int_D|\nabla u|^2d\mathbf x&=
      \frac{1}{\bm\Lambda_1}\int_D v^2 d\mathbf x -\int_Duv d\mathbf x\\
      &\geq  2\int_D uv d \mathbf x-\bm\Lambda_1\int_Du^2 d\mathbf x -\int_Duv d\mathbf x\\
     &=  \int_Duv d\mathbf x-\bm\Lambda_1\int_Du^2 d\mathbf x \\
     &= \int_D|\nabla u|^2d\mathbf x-\bm\Lambda_1\int_Du^2 d\mathbf x\\
     &\geq 0.
   \end{align*}
Moreover,  the first inequality is an equality if and only if $-\Delta u=\bm\Lambda_1 u$, or equivalently, $v\in\mathbf E_1;$ and the last inequality is an equality if and only if  $u\in\mathbf  E_1.$
Note that \eqref{ineq212} plays an essential role in \cite{W24j}.
\end{remark}

The rest of this section is devoted to the proof of Theorem \ref{thm0}(iii). To begin with, we recall some basic properties of the constant $\Lambda_1$ which have been proved in \cite{WZ22}.

 \begin{lemma}[{\cite[Section 3]{WZ22}}]\label{lemmp0}
 It holds that
 \begin{equation}\label{infmm}
 \Lambda_1=\inf_{u\in\mathcal X,\,u\not\equiv 0} \frac{\int_D|\nabla u|^2 d\mathbf x}{\int_Du^2 d\mathbf x},
 \end{equation}
 where
 \[ \mathcal X:=\left\{u\in H^1(D) \;\middle|\;  u|_{\Gamma_0}=0,\,u|_{\Gamma_i}=c_i,\,i=1,\dots, N\right\}.\]
 Moreover, the infimum in \eqref{infmm} is attained; and if $u$ is a minimizer, then
  \begin{itemize}
  \item[(i)]$u\in\mathcal Y$ and satisfies $-\Delta u=\Lambda_1u,$ and,
  \item[(ii)] either $u>0$ in $D$, or else $u<0$ in $D$.
  \end{itemize}
 \end{lemma}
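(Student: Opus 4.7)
My plan is to apply the direct method to the Rayleigh quotient
\[
R(u) := \frac{\int_D |\nabla u|^2\, d\mathbf x}{\int_D u^2\, d\mathbf x}
\]
on $\mathcal X$, and then to identify any minimizer as an eigenfunction of $-\Delta$ realizing the constant $\Lambda_1$ of \eqref{caplam1}. Writing $\mu := \inf_{\mathcal X,\,u\not\equiv 0} R(u)$, the steps are (a) existence of a minimizer, (b) the Euler--Lagrange equation together with the inner-boundary Neumann integral conditions, (c) sign definiteness, and (d) the identification $\mu = \Lambda_1$.

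For (a), a minimizing sequence $\{u_n\} \subset \mathcal X$ normalized by $\int_D u_n^2\, d\mathbf x = 1$ is bounded in $H^1(D)$, so Rellich furnishes (along a subsequence) a limit $u \in H^1(D)$ with $u_n \rightharpoonup u$ in $H^1$ and $u_n \to u$ in $L^2$. Trace continuity preserves the constancy constraints on each $\Gamma_i$, so $u \in \mathcal X$, while lower semicontinuity of the Dirichlet energy yields $R(u) = \mu$. For (b), varying $u + t\phi$ with $\phi \in \mathcal X$ gives
\[
\int_D \nabla u \cdot \nabla \phi\, d\mathbf x = \mu \int_D u \phi\, d\mathbf x \quad \forall\, \phi \in \mathcal X.
\]
Testing with $\phi \in C_c^\infty(D)$ produces $-\Delta u = \mu u$ distributionally in $D$, and elliptic regularity promotes $u$ to $H^2(D)$. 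Integrating by parts in the full weak equation then leaves
\[
\sum_{i=1}^N \bigl(\phi|_{\Gamma_i}\bigr) \int_{\Gamma_i} \partial_{\mathbf n} u\, dS = 0 \quad \forall\, \phi \in \mathcal X,
\]
so choosing $\phi$ equal to $1$ on $\Gamma_i$ and $0$ on the other boundary components (for instance via the harmonic extensions $\zeta_i$ of \eqref{zeteq}) extracts $\int_{\Gamma_i} \partial_{\mathbf n} u\, dS = 0$ for every $i \geq 1$. Hence $u \in \mathcal Y$, which gives item~(i).

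For (c), the function $|u|$ also belongs to $\mathcal X$ with the same Rayleigh quotient (since $u$ is constant on each boundary component, so is $|u|$), so $|u|$ is likewise a minimizer and therefore a non-negative, nontrivial solution of $-\Delta|u| = \mu |u|$ in $D$; the strong maximum principle then forces $|u| > 0$ in $D$, and item~(ii) follows from the continuity of $u$. Finally, for (d), the eigenfunction relation $-\Delta u = \mu u$ yields $\int|\Delta u|^2 = \mu^2 \int u^2$ and $\int|\nabla u|^2 = \mu \int u^2$, so the quotient in \eqref{caplam1} evaluated at $u$ equals $\mu$, giving $\Lambda_1 \leq \mu$; conversely, for any $v \in \mathcal Y$ the boundary conditions give $\int|\nabla v|^2 = -\int v \Delta v$, so Cauchy--Schwarz produces $R(v) \leq \int|\Delta v|^2 / \int|\nabla v|^2$, whence $\mu \leq \Lambda_1$. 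The main delicate point will be the boundary analysis in step~(b) — isolating the Neumann integral conditions on the inner components from the interior equation when testing against functions in $\mathcal X$ rather than $H^1_0(D)$; the remaining ingredients are a textbook combination of the direct method, elliptic regularity, and the strong maximum principle.
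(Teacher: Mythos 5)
Your proof is correct. The paper itself does not prove this lemma---it is quoted from \cite{WZ22}---but your argument follows essentially the same direct-method scheme that the paper does spell out for the analogous Lemma \ref{lemmp1}: minimize the Rayleigh quotient over the weakly closed subspace, derive the Euler--Lagrange equation, extract the inner-boundary Neumann integral conditions by testing with the harmonic functions $\zeta_i$ of \eqref{zeteq}, and apply the strong maximum principle to $|u|$. The one step with no counterpart in the paper's text is your identification of the $H^1$ minimum $\mu$ with the fourth-order quotient $\Lambda_1$ of \eqref{caplam1}, via $\bigl(\int_D|\nabla v|^2\,d\mathbf x\bigr)^2\leq\int_Dv^2\,d\mathbf x\cdot\int_D|\Delta v|^2\,d\mathbf x$ for $v\in\mathcal Y$ (legitimate because the boundary terms vanish for $v\in\mathcal Y$) together with evaluation at the minimizer; that Cauchy--Schwarz argument is valid and cleanly closes the proof.
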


Next, we study the constant $\bm{\Lambda}_1$ by
considering the following minimization problem:
\begin{equation}\label{vpbfm}
\mathbf{m} := \inf_{u \in \mathbf{X},\, u \not\equiv 0}
\frac{\int_D |\nabla u|^2  d\mathbf{x}}{\int_D u^2 d\mathbf{x}},
\end{equation}
where
\[
\mathbf{X} := \left\{ u \in H^1(D) \;\middle|\; \mathsf A_u = 0,\
u|_{\Gamma_i} = c_i,\ i = 0,1,\dots, N \right\}.
\]
It is clear that $\mathbf Y\subset\mathbf X.$

  \begin{lemma}\label{lemmp1}
  $\mathbf m=\bm\Lambda_1,$ and the set of minimizers of \eqref{vpbfm} is exactly $\mathbf E_1.$
\end{lemma}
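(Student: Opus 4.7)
The plan is to exploit the obvious inclusion $\mathbf Y\subset\mathbf X$, which immediately gives $\mathbf m\le \bm\Lambda_1$, since by Theorem \ref{thm0}(ii) every element of $\mathbf E_1\subset\mathbf Y$ attains equality in the Rayleigh quotient at value $\bm\Lambda_1$. The real content is the reverse inequality, which is equivalent to showing that every minimizer of \eqref{vpbfm} already lies in $\mathbf Y$, i.e., automatically satisfies the zero-flux conditions on every boundary component.

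The first step is a standard direct-method argument: $\mathbf X$ is a closed linear subspace of $H^1(D)$, the denominator $\int_D u^2 d\mathbf x$ is weakly continuous by the compact embedding $H^1(D)\hookrightarrow L^2(D)$, the numerator is weakly lower semicontinuous, and the Poincar\'e inequality on the mean-zero subspace prevents the infimum from being zero. Hence there is a minimizer $u\in\mathbf X$ with $\int_D u^2 d\mathbf x=1$. Next I would compute the first variation. Since $\mathbf X$ is a linear subspace, every $\varphi\in\mathbf X$ is an admissible variation, so the Euler--Lagrange equation reads
\[
\int_D\nabla u\cdot\nabla\varphi d\mathbf x=\mathbf m\int_D u\varphi d\mathbf x\qquad\forall\,\varphi\in\mathbf X.
\]

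The key step is to extract both the PDE and the natural boundary conditions from this identity. First, testing against $\varphi\in C_c^\infty(D)$ with $\int_D\varphi d\mathbf x=0$, I would conclude by a standard ``orthogonal-to-mean-zero'' argument that $-\Delta u-\mathbf m u$ is a (distributional, hence by elliptic regularity classical) constant $\lambda$ in $D$. Then I test against the larger class of $\varphi\in\mathbf X$ that are constant $c_j$ on each $\Gamma_j$; integration by parts gives
\[
\lambda\int_D\varphi d\mathbf x+\sum_{j=0}^{N}c_j\int_{\Gamma_j}\frac{\partial u}{\partial\mathbf n} dS=0.
\]
Since $\int_D\varphi d\mathbf x=0$ for $\varphi\in\mathbf X$, the first term drops. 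The main obstacle, and the point requiring care, is that the boundary constants $c_0,\dots,c_N$ are not a priori free because of the zero-mean constraint; however, using the harmonic functions $\zeta_j$ of \eqref{zeteq} together with an $H^1_0$ correction chosen to cancel the mean, one constructs an admissible $\varphi\in\mathbf X$ with prescribed boundary values $c_j$. Thus the $(N+1)$-tuple $(c_0,\dots,c_N)$ ranges freely over $\mathbb R^{N+1}$, forcing
\[
\int_{\Gamma_j}\frac{\partial u}{\partial\mathbf n} dS=0,\qquad j=0,1,\dots,N,
\]
so $u\in\mathbf Y$.

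To finish, integrating $-\Delta u=\mathbf m u+\lambda$ over $D$ and using both $\int_D u d\mathbf x=0$ and the just-derived flux conditions yields $\lambda|D|=0$, so $\lambda=0$ and $u$ solves \eqref{lepzm} with eigenvalue $\mathbf m$. Therefore $\mathbf m$ is one of the eigenvalues $\bm\Lambda_k$, and combined with $\mathbf m\le\bm\Lambda_1$ we obtain $\mathbf m=\bm\Lambda_1$. The same computation shows that any minimizer lies in $\mathbf E_1$; conversely, every nonzero $u\in\mathbf E_1$ saturates the Rayleigh quotient at $\bm\Lambda_1=\mathbf m$ by Theorem \ref{thm0}(ii), hence is a minimizer. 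This identifies the minimizer set exactly as $\mathbf E_1\setminus\{0\}$ (up to the trivial convention on whether one includes $0$), completing the proof.
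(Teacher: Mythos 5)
Your proof is correct and follows essentially the same route as the paper: direct method for existence of a minimizer, the Euler--Lagrange identity over the linear subspace $\mathbf X$, interior test functions to get the PDE, and the harmonic functions $\zeta_j$ to extract the zero-flux conditions, concluding that $\mathbf m$ is an eigenvalue of \eqref{lepzm}. The only cosmetic difference is that you pick up an undetermined constant $\lambda$ by testing with mean-zero $C_c^\infty$ functions and kill it afterwards, whereas the paper tests with $\varphi-\mathsf A_\varphi$ for arbitrary $\varphi\in C_c^\infty(D)$ and uses $\mathsf A_u=0$ to obtain $-\Delta u=\mathbf m u$ directly.
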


    \begin{proof}
 First, we show that the infimum is attained. It suffices to notice that any minimizing sequence, with unit $L^2$-norm after renormalization, is bounded in $H^1(D)$, and that $\mathbf X$ is closed, and thus weakly closed in $H^1(D)$.

   Next, we show that any minimizer $u$ belongs to $\mathbf Y$, and satisfies $-\Delta u=\mathbf m u$ in $D$.
In fact, for any $\phi\in \mathbf X,$ it holds that $u+s\phi\in\mathbf X$ for any $s\in\mathbb R$, and $u+s\phi\not\equiv 0$ if $|s|$ is sufficiently small. Thus
\[\frac{d}{ds}\left(\frac{\int_D|\nabla (u+s\phi)|^2d\mathbf x}{\int_D (u+s\phi)^2d\mathbf x}\right)\bigg|_{s=0}=0,\]
which yields
\begin{equation}\label{wfeq1}
\int_D\nabla u\cdot\nabla \phi d\mathbf x= \mathbf m\int_D u \phi d\mathbf x.
\end{equation}
Taking
$\phi=\varphi-\mathsf A_\varphi$ in \eqref{wfeq1},
where $\varphi\in C_c^\infty(D)$ is arbitrary,  and using $\mathsf A_u=0$, we obtain
\begin{equation}\label{wfeq2}
\int_D\nabla u\cdot\nabla \varphi d\mathbf x=\mathbf m\int_D u \varphi d\mathbf x\quad\forall\, \varphi\in C_c^\infty(D),
\end{equation}
which implies that $-\Delta  u=\mathbf m  u$ in $D$.
Taking in \eqref{wfeq1}
\[\phi=\zeta_i-\mathsf A_{\zeta_i},\quad i=0,  1,\dots,N,
\]
where $\zeta_i$ is given by  \eqref{zeteq},
we obtain
\[\int_D\nabla  u\cdot\nabla \zeta_i d\mathbf x= \mathbf m\int_D u \zeta_id\mathbf x \quad\forall\, i=0,  1,\dots,N, \]
which, after integrating by parts, yields
\[\int_{\Gamma_i}\frac{\partial u}{\partial \mathbf n}dS= 0 \quad\forall \, i=0,  1,\dots,N.\]
Thus $u\in\mathbf Y.$

Now we are ready to prove the desired result.  From the above discussion, we know that
\[\begin{cases}
-\Delta u=\mathbf mu,&\mathbf x\in D,\\
u\in\mathbf Y
\end{cases}
\]
has a nontrivial solution, which implies
$\bm\Lambda_1\leq \mathbf m.$ On the other hand, by choosing $u \in \mathbf{E}_1$, $u \not\equiv 0$, we have
\[
\mathbf{m} \leq \frac{\int_D |\nabla u|^2 d\mathbf{x}}{\int_D u^2 d\mathbf{x}} = \bm\Lambda_1.
\]
Hence we have verified $\mathbf{m} =\bm\Lambda_1$.
The assertion that the set of minimizers of \eqref{vpbfm} is exactly $\mathbf{E}_1$ is then straightforward.
  \end{proof}

 \begin{remark}
A straightforward corollary of Lemma \ref{lemmp1} is as follows:  \emph{for any $u\in\mathbf X$, it holds that
 \begin{equation}\label{inforx}
\bm\Lambda_1\int_Du^2d\mathbf x\leq \int_D|\nabla u|^2d\mathbf x,
\end{equation}
and the equality holds if and only if $u\in\mathbf E_1$.} Obviously, this is a stronger conclusion than Theorem \ref{thm0}(ii).
 \end{remark}

  Now we are ready to give the proof of Theorem \ref{thm0}(iii).

 \begin{proof}[Proof of Theorem \ref{thm0}(iii)]
   Fix $u\in \mathbf E_1$ satisfying $u\not\equiv 0$. By Lemma \ref{lemmp1}, we have that
   \begin{equation}\label{bj1}
   \bm\Lambda_1=\frac{\int_D|\nabla u|^2 d\mathbf x}{\int_Du^2 d\mathbf x}.
   \end{equation}
Notice that $u-u|_{\Gamma_0}\in\mathcal X$, and  $u\not\equiv u|_{\Gamma_0}$ (since otherwise $u\equiv 0$). Therefore, according to \eqref{infmm}, we have that
   \begin{equation}\label{bj2}
    \Lambda_1\leq \frac{\int_D|\nabla (u-u|_{\Gamma_0})|^2 d\mathbf x}{\int_D(u-u|_{\Gamma_0})^2 d\mathbf x}=\frac{\int_D|\nabla u|^2 d\mathbf x}{\int_Du^2 d\mathbf x+\int_D(u|_{\Gamma_0})^2 d\mathbf x}\leq \frac{\int_D|\nabla u|^2 d\mathbf x}{\int_Du^2 d\mathbf x}=\bm\Lambda_1,
    \end{equation}
   where we have used $\mathsf A_u=0$.
   Our aim is to prove $\bm\Lambda_1>\Lambda_1.$   Suppose otherwise, i.e., $\bm\Lambda_1=\Lambda_1;$ then the inequalities in \eqref{bj2} are both equalities. In particular, $u$ attains the infimum in \eqref{infmm}.  By Lemma \ref{lemmp0}, we deduce that $u$ has a constant sign in $D$, which contradicts  $\mathsf A_u=0$.
 \end{proof}

\section{A Burton-type stability criterion}\label{Sec4}

Recall that the kinetic energy $E$ of the  fluid is  given by
\[E=\frac{1}{2}\int_D|\mathbf v|^2 d\mathbf x=\frac{1}{2}\int_D|\nabla \psi|^2 d\mathbf x.\]
From now on,  we regard $E$ as a functional of the vorticity-circulation pair $(\omega,\bm\gamma)$.

\begin{lemma}[{\cite[Section 2]{WZ22}}]\label{lm41}
In terms of the vorticity-circulation pair $(\omega,\bm\gamma)$,   $E$ can be expressed  as
\begin{equation}\label{eexp1}
E(\omega,\bm\gamma)= \frac{1}{2}\int_{D}\omega\mathsf P\omega d\mathbf x+\int_{D}h_{\bm\gamma}\omega d\mathbf x+\frac{1}{2}\sum_{i,j=1}^{N}q_{ij}\gamma_{i}\gamma_{j}.
\end{equation}
Moreover,  $E$ is well defined in $L^p(D)\times \mathbb R^N$, and satisfies the following properties:
\begin{itemize}
\item[(i)] If $v_{n}\rightharpoonup v$ in $L^{p}(D)$ and $\bm\gamma_{n}\to\bm\gamma$ in $\mathbb R^{N},$ then
\[\lim_{n\to \infty}E(v_{n},\bm\gamma_{n})=E(v,\bm\gamma),\]
where $``\rightharpoonup"$ denotes weak convergence in $L^p(D)$.
\item[(ii)] For any  bounded set $K$ of $L^{p}(D)$  and any bounded set  $I$ of $\mathbb R^{N}$, the functional $E$ is uniformly continuous on $K\times I.$
\end{itemize}
\end{lemma}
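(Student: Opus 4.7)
\textbf{Proof plan for Lemma \ref{lm41}.} My plan is to first derive the identity \eqref{eexp1} by a direct integration by parts using the results of Section \ref{Sec2}, and then deduce the weak continuity and uniform continuity statements from the fact that the operator $\mathsf P$ is compact from $L^p(D)$ into $L^{p/(p-1)}(D)$.

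For the identity, I would start from $E=\tfrac12\int_D|\nabla\psi|^2d\mathbf x$. Since $\nabla\psi$ does not depend on the choice of additive constant in the gauge, I replace $\psi$ by $\tilde\psi:=\mathsf P\omega+h_{\bm\gamma}$, which by Lemma \ref{lm20}(iii) solves \eqref{psieq9} with $\tilde\psi|_{\Gamma_0}=0$, $\tilde\psi|_{\Gamma_i}=c_i$ $(i\ge1)$, $-\Delta\tilde\psi=\omega$, and $\int_{\Gamma_i}\partial_{\mathbf n}\tilde\psi\,dS=-\gamma_i$. Integration by parts gives
\[
\int_D|\nabla\tilde\psi|^2d\mathbf x=\int_D\omega\tilde\psi\,d\mathbf x-\sum_{i=1}^{N}c_i\gamma_i.
\]
Expanding $\tilde\psi=\mathsf P\omega+h_{\bm\gamma}$ yields $\int_D\omega\tilde\psi\,d\mathbf x=\int_D\omega\mathsf P\omega\,d\mathbf x+\int_Dh_{\bm\gamma}\omega\,d\mathbf x$. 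For the boundary contribution, I would use $\zeta_j|_{\Gamma_k}=\delta_{jk}$ (for $k\ge1$) together with the formulas \eqref{pvha1} to compute $c_k=(\mathsf P\omega)|_{\Gamma_k}+h_{\bm\gamma}|_{\Gamma_k}=\sum_{i=1}^{N}q_{ik}\int_D\omega\zeta_i\,d\mathbf x-\sum_{i=1}^{N}q_{ik}\gamma_i$; using symmetry $q_{ik}=q_{ki}$ and recognising $\sum_{i,k}q_{ik}\gamma_k\zeta_i=-h_{\bm\gamma}$, this yields $-\sum_k c_k\gamma_k=\int_Dh_{\bm\gamma}\omega\,d\mathbf x+\sum_{i,j=1}^{N}q_{ij}\gamma_i\gamma_j$, which after division by $2$ gives exactly \eqref{eexp1}.

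For the continuity properties, the key observation is that by Lemma \ref{lm20}(i), $\mathsf P$ maps $L^p(D)$ boundedly into $W^{2,p}(D)$, and by Morrey/Rellich--Kondrachov in two dimensions the embedding $W^{2,p}(D)\hookrightarrow C(\bar D)$ is compact for any $p>1$. Therefore $\mathsf P$ is a compact operator from $L^p(D)$ into $L^{p/(p-1)}(D)$. Given $v_n\rightharpoonup v$ in $L^p(D)$ and $\bm\gamma_n\to\bm\gamma$ in $\mathbb R^N$, this gives $\mathsf Pv_n\to\mathsf Pv$ strongly in $L^{p/(p-1)}(D)$ and $h_{\bm\gamma_n}\to h_{\bm\gamma}$ uniformly on $\bar D$. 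Splitting
\[
\int_D v_n\mathsf Pv_n\,d\mathbf x=\int_D(v_n-v)(\mathsf Pv_n-\mathsf Pv)\,d\mathbf x+\int_D(v_n-v)\mathsf Pv\,d\mathbf x+\int_Dv\,\mathsf Pv_n\,d\mathbf x,
\]
the first term tends to $0$ by H\"older and compactness (since $\{v_n\}$ is bounded in $L^p$), the second by weak convergence, and the third by strong convergence of $\mathsf Pv_n$ paired with $v\in L^p$. An analogous splitting for $\int_D h_{\bm\gamma_n}v_n\,d\mathbf x$ handles the linear term, and the quadratic form $\sum_{i,j}q_{ij}\gamma_i^{(n)}\gamma_j^{(n)}$ is trivially continuous. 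This establishes~(i).

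For~(ii), the same decomposition together with the symmetry of $\mathsf P$ gives the pointwise bound
\[
\bigl|E(v_1,\bm\gamma_1)-E(v_2,\bm\gamma_2)\bigr|\le C\bigl(\|v_1-v_2\|_{L^p(D)}+|\bm\gamma_1-\bm\gamma_2|\bigr)\bigl(1+\|v_1\|_{L^p(D)}+\|v_2\|_{L^p(D)}+|\bm\gamma_1|+|\bm\gamma_2|\bigr),
\]
where $C$ depends only on $p$ and $D$ via the operator norm of $\mathsf P$ and the matrix $(q_{ij})$; restricting to a bounded set in $L^p(D)\times\mathbb R^N$ yields uniform continuity. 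I expect the main subtlety to be item~(i): one must be careful that the quadratic term $\int_D v_n\mathsf Pv_n\,d\mathbf x$ is not weakly continuous for an arbitrary bounded bilinear form, and it is precisely the compactness of $\mathsf P$ (inherited from Rellich--Kondrachov in dimension two) that rescues the argument.
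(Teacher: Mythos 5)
The paper does not prove Lemma \ref{lm41} itself; it is quoted verbatim from \cite[Section 2]{WZ22}, so there is no in-paper proof to compare against. Your argument is correct and is exactly the standard route one expects there: the identity \eqref{eexp1} follows from integration by parts against $\tilde\psi=\mathsf P\omega+h_{\bm\gamma}$ with the boundary terms handled via $\zeta_j|_{\Gamma_k}=\delta_{jk}$ and the symmetry of $(q_{ij})$, and the weak continuity in (i) hinges precisely on the compactness of $\mathsf P:L^p(D)\to L^{p/(p-1)}(D)$ inherited from $W^{2,p}(D)\hookrightarrow\hookrightarrow C(\bar D)$ in two dimensions, which your three-term splitting exploits correctly; the Lipschitz-type bound for (ii) is likewise sound.
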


Next, we introduce the concept of admissible pair as an abstraction of the vorticity-circulation pair of the Euler equations.
\begin{definition}[Admissible pair]\label{defadms}\label{defofap}
Let $\zeta:\mathbb R\mapsto  L^p(D)$ be a map, and let $\mathbf b\in\mathbb R^N$. If for any $t\in\mathbb R$, it holds that
 \[E(\zeta(t),\mathbf b)=E(\zeta(0),\mathbf b),\quad \zeta(t)\in \mathcal R_{\zeta(0)},\]
then   $(\zeta,\mathbf b)$ is called an admissible pair. If $\zeta$ is additionally continuous, then $(\zeta,\mathbf b)$ is called a continuous admissible pair.
\end{definition}

By the conservation laws introduced in Section \ref{Sec1}, we know that the vorticity-circulation pair $(\omega(t,\cdot),\bm\gamma)$ for any sufficiently smooth fluid flow is an admissible pair.

The following Burton-type stability criterion, inspired mainly by Burton's original work \cite{B05} (see also \cite{WZ22, WMA}), is essential for proving Theorems \ref{thm1} and \ref{thm2}.

\begin{theorem}\label{thmbsc}
Let $\bm\gamma\in\mathbb R^N$ be fixed, and let  $\mathcal R$ be the rearrangement class of some  $L^p$-function in $D$.
Denote by $\mathcal M$ the set of maximizers of $E(\cdot,\bm\gamma)$ relative to $\mathcal R,$ i.e.,
\[\mathcal M=\left\{v\in\mathcal R \;\middle|\;  E(v,\bm\gamma )=M\right\},\quad M:=\sup_{v\in \mathcal R}E(v,\bm\gamma).\]
 Then $\mathcal M$ is stable in the following sense: for any $\varepsilon>0,$ there exists some $\delta>0$, such that for any admissible pair $(\zeta,\mathbf b)$ in the sense of Definition \ref{defofap}, if
\[ \|\zeta(0)-\hat\omega\|_{L^p(D)}+|\mathbf b-\bm\gamma|<\delta\]
for some $\hat\omega\in\mathcal M,$  then for any $t\in\mathbb R$, there exists some $\tilde\omega\in\mathcal M$ such that
\[ \|\zeta(t)-\tilde\omega\|_{L^p(D)}<\varepsilon.\]
\end{theorem}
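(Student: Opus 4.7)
The plan is to argue by contradiction in the spirit of Burton's compactness-of-maximizers framework. Suppose the conclusion fails; then one can extract sequences of admissible pairs $(\zeta_n, \mathbf b_n)$, maximizers $\hat\omega_n \in \mathcal M$, and times $t_n$ with
\[
\|\zeta_n(0)-\hat\omega_n\|_{L^p(D)} + |\mathbf b_n - \bm\gamma| \to 0,
\qquad
\inf_{\tilde\omega\in\mathcal M}\|\zeta_n(t_n)-\tilde\omega\|_{L^p(D)}\geq \varepsilon_0.
\]
Because every element of $\mathcal R$ shares the common $L^p$-norm of its generator, the sequences $\{\zeta_n(0)\}$ and $\{\zeta_n(t_n)\}$ are bounded in $L^p(D)$, so after passing to a subsequence one may assume $\zeta_n(t_n) \rightharpoonup \omega^*$ weakly in $L^p(D)$.

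The first stage is to identify $E(\omega^*,\bm\gamma)=M$. Using the uniform continuity of $E$ on bounded sets (Lemma \ref{lm41}(ii)) together with $\zeta_n(0)-\hat\omega_n \to 0$, $\mathbf b_n \to \bm\gamma$ and $E(\hat\omega_n,\bm\gamma)=M$, one obtains $E(\zeta_n(0),\mathbf b_n) \to M$. Admissibility in the sense of Definition \ref{defofap} gives $E(\zeta_n(t_n),\mathbf b_n)=E(\zeta_n(0),\mathbf b_n)\to M$, and the weak-sequential continuity in Lemma \ref{lm41}(i), applied to $\zeta_n(t_n) \rightharpoonup \omega^*$ and $\mathbf b_n \to \bm\gamma$, yields $E(\omega^*,\bm\gamma)=M$.

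The decisive second stage is to show $\omega^* \in \mathcal R$ and then upgrade the convergence to strong. A priori $\omega^*$ lies only in the weak sequential closure of $\mathcal R$, which by the classical theorem of Ryff coincides with the closed convex hull $\overline{\mathrm{conv}}\,\mathcal R$; moreover the extreme points of this convex set are precisely the elements of $\mathcal R$. From the decomposition \eqref{eexp1} and the positive-definiteness of $\mathsf P$ (Lemma \ref{lm20}(ii)), $E(\cdot,\bm\gamma)$ is strictly convex on $L^p(D)$, so by Bauer's maximum principle its maximum over the weakly compact convex set $\overline{\mathrm{conv}}\,\mathcal R$ is attained only at extreme points. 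Hence $\omega^*\in\mathcal R$, so $\omega^*\in\mathcal M$. Finally, equimeasurability gives $\|\zeta_n(t_n)\|_{L^p(D)}=\|\zeta_n(0)\|_{L^p(D)}\to\|\omega^*\|_{L^p(D)}$, and the uniform convexity of $L^p(D)$ for $1<p<\infty$ converts weak convergence plus convergence of norms into strong convergence $\zeta_n(t_n)\to \omega^*$ in $L^p(D)$. This contradicts $\inf_{\tilde\omega\in\mathcal M}\|\zeta_n(t_n)-\tilde\omega\|_{L^p(D)}\geq \varepsilon_0$ and completes the proof.

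The main obstacle is the extreme-point step: the weak limit of a maximizing sequence must be prevented from drifting out of $\mathcal R$ into its strictly larger weak closure. The strict convexity of $E$, supplied by the positive-definiteness of $\mathsf P$, is exactly what forces the limiting maximizer to be extreme and so to remain equimeasurable with the generator of $\mathcal R$. Without this strict convexity, nontrivial convex combinations of distinct rearrangements could realize the same maximal energy, the equimeasurability property would be lost in the weak limit, and the uniform-convexity upgrade to strong convergence in the final step would have no toehold.
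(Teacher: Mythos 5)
There is a genuine gap at the hinge of your argument: the assertion that the weak limit $\omega^*$ of $\zeta_n(t_n)$ lies in the weak closure of $\mathcal R$. The admissibility condition only gives $\zeta_n(t_n)\in\mathcal R_{\zeta_n(0)}$, and $\zeta_n(0)$ is merely $L^p$-close to $\hat\omega_n\in\mathcal M$ --- it need not be equimeasurable with any element of $\mathcal R$. So $\zeta_n(t_n)$ lives in a \emph{different} rearrangement class $\mathcal R_{\zeta_n(0)}$ for each $n$, and nothing you have written places $\omega^*$ in $\overline{\mathrm{conv}}\,\mathcal R$; consequently the Ryff/Bauer extreme-point argument has nothing to act on, and the identity $E(\omega^*,\bm\gamma)=M$ by itself carries no information, since $M$ is only known to bound $E$ on $\overline{\mathrm{conv}}\,\mathcal R$. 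This is precisely the technical point that separates stability under general $L^p$ perturbations from stability under isovortical (same-rearrangement-class) perturbations, and it is where the paper inserts Burton's ``follower'' lemma \cite[Lemma 2.3]{BACT}: for each $n$ one picks $\xi_n\in\mathcal R$ realizing the distance between the classes $\mathcal R$ and $\mathcal R_{\zeta_n(t_n)}=\mathcal R_{\zeta_n(0)}$, so that $\|\xi_n-\zeta_n(t_n)\|_{L^p(D)}\le\|\hat\omega_n-\zeta_n(0)\|_{L^p(D)}\to 0$. Only then is $(\xi_n)$ a maximizing sequence \emph{inside} $\mathcal R$, its weak limit coincides with $\omega^*$, and your extreme-point and uniform-convexity steps become available. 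You should state and use this lemma (or an equivalent nonexpansivity property of the distance between rearrangement classes); as written, the proof does not go through.

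Once that is repaired, the remainder of your second stage is a legitimate alternative to the paper's route. The paper shows, via the identity $E(v,\bm\gamma)-E(\tilde v,\bm\gamma)=L(v)-L(\tilde v)+\tfrac12\int_D(v-\tilde v)\mathsf P(v-\tilde v)\,d\mathbf x$, that the weak limit is the \emph{unique} maximizer of the linear functional $L$ over $\overline{\mathcal R^w}$ and then invokes \cite[Theorem 4]{B87} to conclude it belongs to $\mathcal R$; you instead use strict convexity of $E(\cdot,\bm\gamma)$ (from the positive-definiteness of $\mathsf P$) together with Bauer's maximum principle and Ryff's characterization of the extreme points of $\overline{\mathrm{conv}}\,\mathcal R$. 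These are essentially two packagings of the same mechanism, and both correctly reduce the matter to the positive-definiteness of $\mathsf P$; your closing step (weak convergence plus convergence of $L^p$-norms implies strong convergence) is also fine.
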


 \begin{proof}

We begin by proving the following compactness property:
  \begin{equation}\label{claim}
  \textit{Any maximizing sequence of $E(\cdot,\bm\gamma)$ relative to $\mathcal R$ is compact in $L^p(D)$.}
  \end{equation}
More precisely,
 for any $\{v_n\}_{n=1}^\infty\subset\mathcal R$ satisfying
 \[\lim_{n\to\infty} E(v_n,\bm\gamma)=M,\quad M:=\sup_{v\in\mathcal R}E(v,\bm\gamma),\]
 there exists a subsequence, still denoted by $\{v_n\}$ for simplicity, and some $\tilde v\in\mathcal R$, such that $v_n$ converges to $\tilde v$  {strongly} in $L^p(D)$ as $n\to\infty$.
 In fact, since $\{v_n\}$ is obvious bounded in $L^p(D)$, we may assume,
up to a subsequence, that
 $v_n$ converges to some $ \tilde v\in \overline{\mathcal R^w}$  {weakly}  in $L^p(D)$,
where $\overline{\mathcal R^w}$ denotes the weak closure of $\mathcal R$ in $L^p(D).$
In view of Lemma \ref{lm41}(i), we have that $E(\tilde v,\bm\gamma)=M.$
Since
 \[\sup_{v\in\mathcal R}E(v,\bm\gamma)=\sup_{v\in\overline{\mathcal R^w}}E(v,\bm\gamma),\]
 we obtain
 \begin{equation}\label{htm02}
 E(\tilde v,\bm\gamma)\geq  E(v,\bm\gamma)\quad \forall \, v\in \overline{\mathcal R^w}.
 \end{equation}
To proceed, observe the following identity:
 \begin{equation}\label{htm03}
 E(v,\bm\gamma)- E( \tilde v,\bm\gamma)=L(v)-L(\tilde v)+\frac{1}{2}\int_D(v-\tilde v)\mathsf P(v-\tilde v)d\mathbf x,
 \end{equation}
where
\[L(w):=\int_{D}(\mathsf P\tilde v+h_{\bm\gamma})wd\mathbf x.\]
From \eqref{htm02}, \eqref{htm03}, and the positive definiteness of $\mathsf P$ (see \eqref{ppdefy}),  we obtain
 \begin{equation}\label{lleq}
  L(\tilde v)\geq L(v) \quad \forall\, v\in \overline{\mathcal R^w},
  \end{equation}
  and the equality holds if and only if $v=\tilde v$. In other words, $\tilde v$ is the unique maximizer of $L$ relative to  $\overline{\mathcal R^w}$. On the other hand, by applying \cite[Theorem 4]{B87}, there exists some $\hat v\in {\mathcal R}$ such that
  \[L(\hat v)=\sup_{v\in\overline{\mathcal R^w}}L(v).\]
So  we obtain $\tilde v=\hat v\in  \mathcal R$. Strong convergence then follows from the uniform convexity of $L^p$ spaces for $1<p<\infty$.

With \eqref{claim} in hand, we are ready to prove stability. It suffices to show that for any sequence of admissible pairs $\{(\zeta_n,\mathbf b_n)\}$ and any sequence of moments $\{t_n\}\subset\mathbb R$, if
\begin{equation}\label{ems0}
 \|\zeta(0)-\hat \omega\|_{L^p(D)}+|\mathbf b_n-\bm\gamma|\to 0
 \end{equation}
for some $\hat \omega\in\mathcal M$ as $n\to\infty,$  then, up to a subsequence,
\begin{equation}\label{ems1}
\|\zeta_{n}(t_n)-\tilde \omega\|_{L^p(D)}\to 0
\end{equation}
 for some $\tilde \omega\in\mathcal M$ as $n\to\infty.$
To this end, note that by Lemma \ref{lm41}(i) and \eqref{ems0},
  \[
  \lim_{n\to \infty} E(\zeta_n(0),\mathbf b_n)= E(\hat \omega,\bm\gamma)=M,
  \]
which implies, using that $(\zeta_n,\mathbf b_n)$ is an admissible pair,
\[\lim_{n\to \infty}  E(\zeta_n(t_n),\mathbf b_n) =M.\]
By Lemma \ref{lm41}(ii), we further obtain
    \begin{equation}\label{cvrm1}
\lim_{n\to \infty}  E(\zeta_n(t_n),\bm\gamma ) =M.
  \end{equation}
  To apply \eqref{claim}, we introduce a ``follower'' $\xi_n$ associated with each $\zeta_n(t_n)$. More specifically, by \cite[Lemma 2.3]{BACT}, for each $n$ there exists $\xi_n \in \mathcal{R}$ such that
\begin{equation}\label{infvw}
\|\xi_n-\zeta_n(t_n)\|_{L^p(D)}= \inf\left\{\|v-w\|_{L^p(D)}\;\middle|\;  v\in\mathcal R,\ w\in\mathcal R_{\zeta_n(t_n)}\right\}.
\end{equation}
In particular,
\begin{equation}\label{eta1}
\|\xi_n-\zeta_n(t_n)\|_{L^p(D)}\leq \|\zeta_n(0)-\hat\omega  \|_{L^p(D)},
\end{equation}
which in combination with \eqref{ems0} implies
\begin{equation}\label{eta2}
\lim_{n\to \infty}\|\xi_n-\zeta_n(t_n)\|_{L^p(D)}=0.
\end{equation}
From  \eqref{cvrm1} and \eqref{eta2}, we can apply  Lemma \ref{lm41}(ii) again to obtain
\begin{equation}\label{eta3}
\lim_{n\to \infty} E(\xi_n,\bm\gamma)=M.
\end{equation}
To conclude, we have constructed a  sequence $\{\xi_n\}\subset\mathcal R$ such that
\eqref{eta3} holds. Then we can apply \eqref{claim} to find $\tilde \omega\in\mathcal M$ such that, up to a subsequence,
\begin{equation}\label{eta4}
\lim_{n\to \infty}\|\xi_n-\tilde \omega\|_{L^p(D)}=0.
\end{equation}
The desired assertion \eqref{ems1} then
follows from \eqref{eta2} and \eqref{eta4}. The proof is complete.

 \end{proof}

\begin{remark}
According to  \eqref{claim},  $\mathcal{M}$ is a nonempty, compact subset of $L^p(D)$.
\end{remark}

\begin{remark}
If  $\mathcal N\subset\mathcal M$ is isolated, i.e.,
\[\inf_{v\in\mathcal N,\,w\in\mathcal M\setminus\mathcal N}\|v-w\|_{L^p(D)}>0,\]
then $\mathcal N$ is stable in the following sense:
 for any $\varepsilon>0,$ there exists some $\delta>0$, such that for any  \emph{continuous} admissible pair $(\zeta,\mathbf b)$, if
\[ \|\zeta(0)-\hat\omega\|_{L^p(D)}+|\mathbf b-\bm\gamma|<\delta\]
for some $\hat\omega\in\mathcal N,$  then for any $t\in\mathbb R$, there exists some $\tilde\omega\in\mathcal N$ such that
\[ \|\zeta(t)-\tilde\omega\|_{L^p(D)}<\varepsilon.\]
 This follows directly from a standard continuity argument.
\end{remark}

\section{Proofs of Theorems \ref{thm1} and \ref{thm2}}\label{Sec5}

Having made enough preparations, we are ready to prove Theorems \ref{thm1} and \ref{thm2}. Let $(\omega^s,\bm\gamma^s)$ be the vorticity-circulation pair of the steady flow in Theorems \ref{thm1} or  \ref{thm2}.
Consider the following maximization problem:
\[M=\sup_{v\in\mathcal R_{\omega^s}}E(v,\bm\gamma^s).\]
Denote by $\mathcal M$ the set of maximizers of $E(\cdot,\bm\gamma^s)$ relative to $\mathcal R_{\omega^s}$.

\subsection{Proof  of Theorem \ref{thm1}}
By Theorem \ref{thmbsc},  Theorem \ref{thm1} follows immediately from the following proposition.

 \begin{proposition}\label{propsingleton}
 Under the conditions of Theorem \ref{thm1},  it holds that $\mathcal M=\{\omega^s\}.$

 \end{proposition}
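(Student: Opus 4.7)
The plan is to show that every $v\in\mathcal R_{\omega^s}\setminus\{\omega^s\}$ satisfies $E(v,\bm\gamma^s)<E(\omega^s,\bm\gamma^s)$. Setting $\phi:=v-\omega^s$, equimeasurability forces $\int_D\phi\,d\mathbf x=0$. Combining this with the expansion of \eqref{eexp1}, the symmetry of $\mathsf P$, and the identification of $\mathsf P\omega^s+h_{\bm\gamma^s}$ with $\psi^s$ up to an additive constant (guaranteed by Lemma \ref{lm20}(iii) and the zero-mean normalization of $\psi^s$), I would first write
\[
E(v,\bm\gamma^s)-E(\omega^s,\bm\gamma^s)=\int_D\psi^s\phi\,d\mathbf x+\frac12\int_D\phi\mathsf P\phi\,d\mathbf x,
\]
the additive constant disappearing thanks to $\int_D\phi\,d\mathbf x=0$.

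To handle the linear term, I would bring in the Casimir $F(t):=\int_0^t g^{-1}(s)\,ds$, well-defined on the range of $\omega^s$ since $g$ is monotone with $F'(\omega^s)=\psi^s$. Equimeasurability gives $\int_D F(v)\,d\mathbf x=\int_D F(\omega^s)\,d\mathbf x$, and a second-order Taylor expansion $F(v)=F(\omega^s)+F'(\omega^s)\phi+\tfrac12 F''(\xi)\phi^2$ together with $F''(\xi)=1/g'(g^{-1}(\xi))\ge 1/\max_{\bar D}g'(\psi^s)$ on the relevant range yields
\[
\int_D\psi^s\phi\,d\mathbf x=-\frac12\int_D F''(\xi)\phi^2\,d\mathbf x\le -\frac{1}{2\max_{\bar D}g'(\psi^s)}\int_D\phi^2\,d\mathbf x.
\]
For the quadratic term I would set $u:=\mathsf T\phi\in\mathbf Y$ and note that $\mathsf P\phi$ and $\mathsf T\phi$ differ by a constant absorbed by $\int_D\phi=0$. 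Integration by parts, whose boundary contributions vanish because $u$ is constant on each $\Gamma_i$ and $\int_{\Gamma_i}\partial_{\mathbf n}u\,dS=0$, yields $\int_D\phi\mathsf P\phi\,d\mathbf x=\int_D|\nabla u|^2\,d\mathbf x$ and $\int_D\phi^2\,d\mathbf x=\int_D|\Delta u|^2\,d\mathbf x$, after which \eqref{ineq212} provides $\int_D\phi\mathsf P\phi\,d\mathbf x\le\bm\Lambda_1^{-1}\int_D\phi^2\,d\mathbf x$. Assembling the two bounds,
\[
E(v,\bm\gamma^s)-E(\omega^s,\bm\gamma^s)\le\frac12\left(\frac{1}{\bm\Lambda_1}-\frac{1}{\max_{\bar D}g'(\psi^s)}\right)\int_D\phi^2\,d\mathbf x,
\]
and the hypothesis $\max g'(\psi^s)<\bm\Lambda_1$ makes the coefficient strictly negative; consequently every maximizer has $\phi\equiv 0$, giving $\mathcal M=\{\omega^s\}$.

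The main obstacle I anticipate is the degenerate subcase $\min g'(\psi^s)=0$, in which $g$ may fail to be strictly monotone on the range of $\psi^s$ and the pointwise $g^{-1}$ is not a genuine function. My workaround would be either to replace $F$ by a convex function whose subdifferential at each value $\omega^s(\mathbf x)$ contains $\psi^s(\mathbf x)$ (such a function exists because $\omega^s$ and $\psi^s$ are similarly ordered through $g$), adapting the Taylor-type remainder bound accordingly, or to perturb $g$ to $g_\varepsilon(t):=g(t)+\varepsilon t$, run the strictly monotone argument, and pass to $\varepsilon\to 0^+$ using the continuity of $E$ in Lemma \ref{lm41}(ii). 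The trivial subcase $\max g'(\psi^s)=0$ forces $\omega^s$ to be constant, so $\mathcal R_{\omega^s}=\{\omega^s\}$ automatically.
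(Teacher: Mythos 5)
Your proposal is correct and, at bottom, is the paper's own argument in a slightly rearranged form: both proofs reduce to showing that $\omega^s$ strictly maximizes $E(\cdot,\bm\gamma^s)$ over $\mathcal R_{\omega^s}$, both start from the identity $E(v,\bm\gamma^s)-E(\omega^s,\bm\gamma^s)=\int_D\psi^s\phi\,d\mathbf x+\tfrac12\int_D\phi\mathsf P\phi\,d\mathbf x$ with $\phi=v-\omega^s$ of zero mean, and both exploit the conservation of a Casimir built from $g^{-1}$ together with the sharp eigenvalue inequality of Theorem \ref{thm0}. The difference is only in bookkeeping: you estimate the linear and quadratic terms separately, using the strong convexity of $F=\int g^{-1}$ (modulus $1/\max_{\bar D}g'(\psi^s)$) for the first and \eqref{ineq212} for the second, whereas the paper runs a single Fenchel--Young computation at the shifted point $(\omega^s+\varrho,\psi^s+\mathsf T\varrho)$ and invokes \eqref{rk31in}; the two are algebraically equivalent, and your split has the mild advantage that $\hat G=F$ is only ever evaluated on the essential range of $\omega^s$, so no extension of $g$ outside the range of $\psi^s$ is strictly needed. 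Two cautions. First, your pointwise Taylor expansion with $F''(\xi)=1/g'(g^{-1}(\xi))$ only makes literal sense when $\min_{\bar D}g'(\psi^s)>0$; for the degenerate case your workaround (a) is exactly what the paper does --- it takes the Legendre transform $\hat G$ of $G=\int g$, for which $\psi^s(\mathbf x)\in\partial\hat G(\omega^s(\mathbf x))$ and the strong-convexity inequality survives in subdifferential form --- so you should commit to that route. Second, workaround (b) does not work as stated: replacing $g$ by $g_\varepsilon(t)=g(t)+\varepsilon t$ destroys the relation $\omega^s=g_\varepsilon(\psi^s)$, so $g_\varepsilon^{-1}(\omega^s)$ need not converge to $\psi^s$ where $g$ is flat (on a flat piece $g_\varepsilon^{-1}(\omega^s)$ locks onto a single point of the level set, not the value of $\psi^s$), and the linear-term identity you rely on acquires an error that does not vanish as $\varepsilon\to0^+$. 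Since (a) suffices, this does not invalidate the proof, but (b) should be dropped.
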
\label{propm1}
 \begin{proof}
 Without loss of generality, we may assume that $g\in C^1(\mathbb R)$, and satisfies
 \begin{equation}\label{aspg1}
  0\leq g'(s)\leq \bm\Lambda_1-\epsilon\quad\forall\,s\in\mathbb R
 \end{equation}
for some sufficiently small $\epsilon>0$, and
 \begin{equation}\label{aspg2}
 g' \equiv {\rm positive\,\, constant}\,\,\, {\rm in} \,\,\, \mathbb R\setminus \left[\min_{\bar D}\mathsf G\omega^s-1,\max_{\bar D}\mathsf G\omega^s+1\right].
 \end{equation}
This can be achieved by redefining the values of $g$ outside the interval $\left[\min_{\bar D}\mathsf G\omega^s,\max_{\bar D}\mathsf G\omega^s\right]$; see,  for example,  Lemma 2.4 in \cite{W22} for details. Note that \eqref{aspg1} implies that
\begin{equation}\label{texp}
G(s+\tau)\leq G(s)+ g(s)\tau+\frac{1}{2}(\bm\Lambda_1-\epsilon)\tau^2\quad\forall\,
s,\tau\in\mathbb R.
\end{equation}
Let $G$ be an antiderivative of $g$, and
let $\hat G$ be the Legendre transform  of $G$, i.e.,
\[\hat G(s):=\sup_{\tau\in\mathbb R}\left(s\tau-G(\tau)\right).\]
The condition \eqref{aspg1} ensures that   $\hat G$
 is  real-valued and locally Lipschitz; see  \cite[Lemma 2.3]{W22}.
 Moreover, from the definition of Legendre transform, we see that
\begin{equation}\label{hatg}
\hat G(s)+G(\tau)\geq s\tau\quad\forall\,s,\tau\in\mathbb R,
\end{equation}
and the  equality holds if and only if $s=g(\tau)$.
 In view of  \eqref{hatg} and the relation
$\omega^s=g(\psi^s),$
we have
\begin{equation}\label{hatg913}
\hat G(\omega^s+\varrho)+ G(\psi^s+\mathsf T\varrho) \geq (\psi^s+\mathsf T\varrho)( \omega^s+\varrho)
\end{equation}
and \begin{equation}\label{ggeq}
G(\psi^s)+\hat G( \omega^s)=\psi^s \omega^s.
\end{equation}

Fix  $\varrho\in\mathcal R_{\omega^s}-\omega^s,$ $\varrho\not\equiv 0.$  Our aim is to show that
\begin{equation}\label{aim}
E(\omega^s,\bm\gamma^s)>E(\omega^s+\varrho,\bm\gamma^s).
\end{equation}
By a direct calculation,
\begin{equation}\label{comp1}
\begin{split}
 &E(\omega^s,\bm\gamma^s)-E(\omega^s+\varrho,\bm\gamma^s)\\
 =&\frac{1}{2}\int_D \omega^s\mathsf P \omega^s d\mathbf x-\frac{1}{2}\int_D( \omega^s+\varrho)\mathsf P( \omega^s+\varrho) d\mathbf x-\int_D\varrho h_{\bm\gamma^s} d\mathbf x \\
 =&- \frac{1}{2}\int_D\varrho\mathsf P\varrho d\mathbf x- \int_D\psi^s\varrho  d\mathbf x\\
 =&- \frac{1}{2}\int_D\varrho\mathsf T\varrho d\mathbf x- \int_D\psi^s\varrho  d\mathbf x.
   \end{split}
\end{equation}
On the other hand, using  \eqref{texp}, \eqref{hatg913}, \eqref{ggeq}, and the fact that $\omega^s+\varrho\in\mathcal R_{\omega^s},$ we have the following:
\begin{equation}\label{comp2}
\begin{split}
 0=& \int_D\hat G(\omega^s+\varrho)d\mathbf x-\int_D\hat G(\omega^s)d\mathbf x\\
 =& \int_D\hat G(\omega^s+\varrho)d\mathbf x-\int_D\psi^s \omega^s-G(\psi^s)d\mathbf x\\
 \geq& \int_D ( \omega^s+\varrho)(\psi^s+\mathsf T\varrho)-G(\psi^s +\mathsf T\varrho)d\mathbf x -\int_D\psi^s \omega^s-G(\psi^s )d\mathbf x\\
 =&  - \int_DG(\psi^s+\mathsf T\varrho )-G(\psi^s)d\mathbf x +\int_D\psi^s\varrho d\mathbf x+\int_D(\omega^s+\varrho) \mathsf T\varrho d\mathbf x\\
 \geq&-\int_Dg(\psi^s) \mathsf T\varrho +\frac{1}{2}(\bm\Lambda_1-\epsilon)(\mathsf T\varrho)^2d\mathbf x+\int_D\psi^s\varrho d\mathbf x+\int_D(\omega^s+\varrho) \mathsf T\varrho d\mathbf x\\
 =&\int_D\psi^s\varrho d\mathbf x+\int_D\varrho \mathsf T\varrho d\mathbf x-\frac{1}{2}(\bm\Lambda_1-\epsilon)\int_D(\mathsf T\varrho)^2d\mathbf x.
  \end{split}
\end{equation}
Note that we have used $\omega^s=g(\psi^s)$ in the last equality.
Combining \eqref{comp1} and \eqref{comp2}, and applying the inequality \eqref{rk31in} in Remark \ref{rk31} (noting that $\varrho\in\mathring L^2(D)$), we obtain
\[
E( \omega^s,\bm\gamma^s)-E( \omega^s+\varrho,\bm\gamma^s) \geq \frac{1}{2}\int_D\varrho\mathsf T\varrho d\mathbf x  -\frac{1}{2} (\bm\Lambda_1-\epsilon)\int_D (\mathsf T\varrho)^2d\mathbf x\geq \frac{1}{2}\epsilon\int_D (\mathsf T\varrho)^2d\mathbf x.
\]
Since $\varrho\not \equiv 0,$ it follows that $\mathsf T\varrho\not \equiv 0$. Hence, the desired claim \eqref{aim} holds, and the proof is complete.

\end{proof}

\subsection{Proof of Theorem  \ref{thm2}}

 Theorem \ref{thm2} is a straightforward corollary of the following proposition.

\begin{proposition}\label{propm2}
Under the conditions of Theorem \ref{thm2}, it holds that
 \begin{equation}\label{msubset}
 \omega^s\in\mathcal M\subset (\omega^s+\mathbf E_1)\cap \mathcal R_{\omega^s}.
 \end{equation}
\end{proposition}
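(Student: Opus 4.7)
The plan is to repeat the energy computation from the proof of Proposition \ref{propsingleton}, but now with the borderline case $\epsilon=0$, and then extract additional information from the equality case of Remark \ref{rk31} to pin down the maximizers.

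First, as in the strict case, I would extend $g$ to a function in $C^{1}(\mathbb R)$ which agrees with the original $g$ on $[\min_{\bar D}\mathsf G\omega^s,\max_{\bar D}\mathsf G\omega^s]$, satisfies $0\le g'(s)\le \bm\Lambda_1$ for every $s\in\mathbb R$, and is constant outside a slightly larger interval, as allowed by Lemma 2.4 in \cite{W22}. This extension preserves the identity $\omega^s=g(\psi^s)$, and the uniform bound $g'\le\bm\Lambda_1$ upgrades to the pointwise convexity estimate
\[
G(s+\tau)\le G(s)+g(s)\tau+\tfrac{1}{2}\bm\Lambda_1\tau^2\quad\forall\,s,\tau\in\mathbb R,
\]
where $G$ is an antiderivative of $g$. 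The Legendre transform $\hat G$ is then still real-valued and locally Lipschitz, and the duality inequality $\hat G(s)+G(\tau)\ge s\tau$ with equality precisely when $s=g(\tau)$ remains available.

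Next, for any $\varrho\in\mathcal R_{\omega^s}-\omega^s$, I would repeat the two calculations \eqref{comp1} and \eqref{comp2} verbatim, but with $\epsilon=0$ replacing the strict bound. Chaining them as before yields
\[
E(\omega^s,\bm\gamma^s)-E(\omega^s+\varrho,\bm\gamma^s)\;\ge\;\tfrac{1}{2}\int_D\varrho\,\mathsf T\varrho\,d\mathbf x-\tfrac{1}{2}\bm\Lambda_1\int_D(\mathsf T\varrho)^2\,d\mathbf x.
\]
Since $\mathsf T\varrho\in\mathbf Y$ with $-\Delta\mathsf T\varrho=\varrho$, integration by parts gives $\int_D\varrho\,\mathsf T\varrho\,d\mathbf x=\int_D|\nabla\mathsf T\varrho|^2\,d\mathbf x$, and the inequality \eqref{rk31in} of Remark \ref{rk31} shows that the right-hand side above is nonnegative. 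Hence $E(\omega^s,\bm\gamma^s)\ge E(\omega^s+\varrho,\bm\gamma^s)$ for every $\varrho$, which gives $\omega^s\in\mathcal M$.

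Finally, to obtain the inclusion $\mathcal M\subset(\omega^s+\mathbf E_1)\cap\mathcal R_{\omega^s}$, suppose $\omega^s+\varrho\in\mathcal M$, so that both inequalities in the chain above collapse to equalities. In particular,
\[
\int_D|\nabla\mathsf T\varrho|^2\,d\mathbf x=\bm\Lambda_1\int_D(\mathsf T\varrho)^2\,d\mathbf x,
\]
and the equality statement in Remark \ref{rk31} forces $\varrho\in\mathbf E_1$, which is exactly what is required. I expect the main subtlety to be the equality case, namely verifying that the step from \eqref{comp2} (the $\hat G$-duality together with the Taylor bound) can indeed be saturated only when $\varrho\in\mathbf E_1$; this amounts to checking that the sole inequality in the chain that can lose room is the Poincar\'e-type one furnished by Remark \ref{rk31}, since the duality inequality becomes an equality automatically given $\omega^s=g(\psi^s)$ and $\omega^s+\varrho\in\mathcal R_{\omega^s}$. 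Once this is clarified, the conclusion \eqref{msubset} follows.
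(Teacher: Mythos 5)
Your proposal is correct and follows essentially the same route as the paper: rerun the computations of Proposition \ref{propsingleton} with $\epsilon=0$ to obtain $E(\omega^s,\bm\gamma^s)-E(\omega^s+\varrho,\bm\gamma^s)\ge\tfrac12\int_D\varrho\,\mathsf T\varrho\,d\mathbf x-\tfrac12\bm\Lambda_1\int_D(\mathsf T\varrho)^2\,d\mathbf x\ge 0$, and invoke the equality case of Theorem \ref{thm0}(ii) (via Remark \ref{rk31}) to force $\varrho\in\mathbf E_1$ for maximizers. The only quibble is your closing aside: there is no need to analyze when the $\hat G$-duality step is saturated, since the collapse of the chain $0\ge A\ge B\ge 0$ already forces the final Poincar\'e-type inequality $B\ge 0$ to be an equality, which is all that is used (and, contrary to your parenthetical, the duality inequality is \emph{not} automatically an equality merely because $\omega^s=g(\psi^s)$ and $\omega^s+\varrho\in\mathcal R_{\omega^s}$).
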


  \begin{proof}

  As in the proof of Proposition \ref{propm1}, we may assume that
\[
 g\in C^1(\mathbb R),\quad  0\leq g'(s)\leq \bm\Lambda_1 \quad\forall\,s\in\mathbb R,
\]
\[
 g' \equiv {\rm positive\,\, constant}\,\,\, {\rm in} \,\,\, \mathbb R\setminus \left[\min_{\bar D} \mathsf G\omega^s -1,\max_{\bar D} \mathsf G\omega^s +1\right].
\]
Then \eqref{texp} becomes
\[G(s+\tau)\leq G(s)+ g(s)\tau+\frac{1}{2} \bm\Lambda_1 \tau^2\quad\forall\,
s,\tau\in\mathbb R.\]
In this case, \eqref{comp1} still holds, i.e.,
\begin{equation}\label{comp3}
E(\omega^s,\bm\gamma^s)-E(\omega^s+\varrho,\bm\gamma^s)
 =- \frac{1}{2}\int_D\varrho\mathsf T\varrho d\mathbf x- \int_D\psi^s\varrho  d\mathbf x;
 \end{equation}
while \eqref{comp2} becomes
\begin{equation}\label{comp4}
\begin{split}
 0=& \int_D\hat G(\omega^s+\varrho)d\mathbf x-\int_D\hat G(\omega^s)d\mathbf x\\
 =& \int_D\hat G(\omega^s+\varrho)d\mathbf x-\int_D\psi^s \omega^s-G(\psi^s)d\mathbf x\\
 \geq& \int_D ( \omega^s+\varrho)(\psi^s+\mathsf T\varrho)-G(\psi^s +\mathsf T\varrho)d\mathbf x -\int_D\psi^s \omega^s-G(\psi^s )d\mathbf x\\
 =&  - \int_DG(\psi^s+\mathsf T\varrho )-G(\psi^s)d\mathbf x +\int_D\psi^s\varrho d\mathbf x+\int_D(\omega^s+\varrho) \mathsf T\varrho d\mathbf x\\
 \geq&-\int_Dg(\psi^s) \mathsf T\varrho +\frac{1}{2} \bm\Lambda_1 (\mathsf T\varrho)^2d\mathbf x+\int_D\psi^s\varrho d\mathbf x+\int_D(\omega^s+\varrho) \mathsf T\varrho d\mathbf x\\
 =&\int_D\psi^s\varrho d\mathbf x+\int_D\varrho \mathsf T\varrho d\mathbf x-\frac{1}{2} \bm\Lambda_1 \int_D(\mathsf T\varrho)^2d\mathbf x.
  \end{split}
\end{equation}
From \eqref{comp3} and \eqref{comp4}, we have that
 \begin{equation}\label{egeqe}
E( \omega^s,\bm\gamma^s)-E( \omega^s+\varrho,\bm\gamma^s) \geq \frac{1}{2}\int_D\varrho\mathsf T\varrho d\mathbf x  -\frac{1}{2} \bm\Lambda_1 \int_D (\mathsf T\varrho)^2d\mathbf x\geq  0.
\end{equation}
Hence $\omega^s\in\mathcal M$. Moreover, by Theorem \ref{thm0}(ii), if the equality holds, then $\varrho\in\mathbf E_1$. This verifies \eqref{msubset}, and the proof is complete.

 \end{proof}

 \begin{remark}
It can be proved that
 \begin{equation}\label{msubrv}
 \mathcal M=\left\{\omega^s+\varrho \;\middle|\;  \varrho\in\mathbf E_1,\ \omega^s+\varrho\in   \mathcal R_{\omega^s},\ \mathsf A_{\mathsf P\varrho}\mathsf A_{\omega^s}+\mathsf A_{\varrho h_{\bm\gamma^s}}=0\right\}.
 \end{equation}
In particular,  if $\bm\gamma^s=\mathbf 0$ or  $D$ is simply connected, and $\mathsf A_{\omega^s}=0$,
then
$\mathcal M=(\omega^s+\mathbf E_1)\cap\mathcal R_{\omega^s}.
$
 To prove \eqref{msubrv}, by Proposition \ref{propm2}, it suffices to show that for any $\varrho$ satisfying $\varrho\in\mathbf E_1$ and $\omega^s+\varrho\in\mathcal R_{\omega^s},$
 \[ E(\omega^s+\varrho,\bm\gamma^s)=E(\omega^s,\bm\gamma^s) \quad\Longleftrightarrow\quad \mathsf A_{\mathsf P\varrho}\mathsf A_{\omega^s}+\mathsf A_{\varrho h_{\bm\gamma^s}}=0\]
This is a straightforward consequence of the following observation: for any $\varrho$ satisfying $\varrho\in\mathbf E_1$ and $\omega^s+\varrho\in\mathcal R_{\omega^s},$
 \begin{align*}
 &E(\omega^s,\bm\gamma^s)-E(\omega^s+\varrho,\bm\gamma^s)\\
 =&- \frac{1}{2}\int_D\varrho\mathsf T\varrho d\mathbf x- \int_D(\mathsf P\omega^s+h_{\bm\gamma^s})\varrho  d\mathbf x\\
 =&- \frac{1}{2\bm\Lambda_1}\int_D\varrho^2 d\mathbf x- \int_D\omega^s\mathsf P\varrho d\mathbf x-\int_D\varrho h_{\bm\gamma^s}d\mathbf x\\
 =&- \frac{1}{2\bm\Lambda_1}\int_D\varrho^2 d\mathbf x- \int_D\omega^s\mathsf T\varrho d\mathbf x-\mathsf A_{\mathsf P\varrho}\int_D\omega^sd\mathbf x -\int_D\varrho h_{\bm\gamma^s}d\mathbf x\\
 =&- \frac{1}{2\bm\Lambda_1}\int_D\varrho^2 d\mathbf x- \frac{1}{\bm\Lambda_1}\int_D\omega^s \varrho d\mathbf x-\mathsf A_{\mathsf P\varrho}\int_D\omega^sd\mathbf x -\int_D\varrho h_{\bm\gamma^s}d\mathbf x\\
 =&-\mathsf A_{\mathsf P\varrho}\int_D\omega^sd\mathbf x -\int_D\varrho h_{\bm\gamma^s}d\mathbf x.
 \end{align*}
Note that in the last equality we have used
\[\int_D\omega^s\varrho  \mathbf x=-\frac{1}{2}\int_D\varrho^2 \mathbf x,\]
which follows from  $\|\omega^s+\varrho\|_{L^2(D)}=\|\omega^s\|_{L^2(D)}.$

 \end{remark}

\section{Proof of Theorem \ref{thm3}}\label{Sec6}

\begin{proof}[Proof of Theorem \ref{thm3}(i)]
By rotational symmetry, we have
\begin{equation}\label{rotainv}
E(v) = E(\omega^s) \quad \text{and} \quad v \in \mathcal R_{\omega^s}
\end{equation}
for any $v\in  \mathcal O_{\omega^s}.$
On the other hand, we have proved in Proposition \ref{propsingleton} that $\omega^s$ is the unique maximizer of $E$ relative to $\mathcal R_{\omega^s}$.
It follows that $\omega^s=v$ for any $v\in  \mathcal O_{\omega^s}$, which means that  $\omega^s$ is radial. The stability assertion is a straightforward consequence of Theorem \ref{thm1}.
\end{proof}

The rest of this section is devoted to the proof  of Theorem \ref{thm3}(ii). From now on, let $\omega^s$ be as given in Theorem \ref{thm3}(ii).

\begin{lemma}\label{lm661}
  There exists some radial function $\omega^r$ and some $\omega^e\in\mathbf E_1$ such that
$\omega^s=\omega^r+\omega^e$.
\end{lemma}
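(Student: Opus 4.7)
The plan is to exploit the rotational symmetry of the disk together with the inclusion $\mathcal{M} \subset (\omega^s + \mathbf{E}_1) \cap \mathcal{R}_{\omega^s}$ proved in Proposition \ref{propm2}. First observe that replacing $\psi^s$ by $\psi^s - \mathsf{A}_{\psi^s}$ and $g$ by $\tilde g(s) := g(s + \mathsf{A}_{\psi^s})$ leaves both $\omega^s = -\Delta \psi^s$ and the pointwise bounds on $g'(\psi^s)$ unchanged, so without loss of generality the hypotheses of Theorem \ref{thm2} hold (with $\bm\gamma^s$ empty since $D$ is simply connected). Consequently, by Proposition \ref{propm2}, $\omega^s \in \mathcal{M}$.

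For each $\alpha \in \mathbb{R}$, denote $\omega^s_\alpha(r,\theta) := \omega^s(r, \theta + \alpha)$. Rotations preserve distribution functions, so $\omega^s_\alpha \in \mathcal{R}_{\omega^s}$. Moreover, because $D$ is a disk, the operator $\mathsf{P}$ (which here coincides with the Dirichlet Green's operator $\mathsf{G}$, since the sum in \eqref{pvha1} is empty in the simply-connected case) commutes with rotations, and hence $E(\omega^s_\alpha) = E(\omega^s)$. It follows that $\omega^s_\alpha \in \mathcal{M}$ for every $\alpha \in \mathbb{R}$, and invoking the inclusion from Proposition \ref{propm2} gives
\[
\omega^s_\alpha - \omega^s \in \mathbf{E}_1 \quad \forall\, \alpha \in \mathbb{R}.
\]

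Finally, I would expand $\omega^s$ as an angular Fourier series,
\[
\omega^s(r,\theta) = c_0(r) + \sum_{n \geq 1} \left[a_n(r)\cos(n\theta) + b_n(r)\sin(n\theta)\right].
\]
Since $\mathbf{E}_1 = {\rm span}\{J_0(j_{1,1}r),\, J_1(j_{1,1}r)\sin\theta,\, J_1(j_{1,1}r)\cos\theta\}$ (Example \ref{examins}) contains only the angular modes $n = 0, \pm 1$, matching Fourier coefficients of $\omega^s_\alpha - \omega^s$ for well-chosen $\alpha$ (e.g.\ $\alpha = \pi/n$ to isolate each higher mode, and $\alpha = \pi$ to read off the $n = 1$ profile) forces $a_n \equiv b_n \equiv 0$ for all $n \geq 2$, and forces $a_1$ and $b_1$ to be scalar multiples of $J_1(j_{1,1}r)$. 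Setting $\omega^r(r) := c_0(r)$ and $\omega^e(r,\theta) := a_1(r)\cos\theta + b_1(r)\sin\theta \in \mathbf{E}_1$ then yields the desired decomposition $\omega^s = \omega^r + \omega^e$.

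The conceptual heart of the argument is recognizing that the entire rotation orbit $\mathcal{O}_{\omega^s}$ lies in the maximizer set $\mathcal{M}$, so that the rigidity $\mathcal{M} \subset \omega^s + \mathbf{E}_1$ forces all angular modes of $\omega^s$ other than $n = 0, \pm 1$ to vanish. Once this is recognized, no substantial obstacle remains; the verification reduces to routine angular Fourier analysis combined with the explicit description of $\mathbf{E}_1$.
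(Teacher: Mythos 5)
Your proposal is correct and rests on the same key idea as the paper's proof: the rotation orbit of $\omega^s$ lies in the maximizer set $\mathcal M$, so Proposition \ref{propm2} forces $\omega^s(r,\theta+\alpha)-\omega^s(r,\theta)\in\mathbf E_1$ for every $\alpha$. The only difference is the finishing step --- the paper passes to the limit $\alpha\to 0$ (using that $\mathbf E_1$ is closed) to get $\partial_\theta\omega^s\in\mathbf E_1$ and then integrates back, killing the $J_0$ term by $2\pi$-periodicity, whereas you match angular Fourier modes for well-chosen $\alpha$; both computations are routine and valid.
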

\begin{proof}
  Since $\omega^s(r,\theta)$ is a maximizer,  by rotational symmetry $\omega^s(r,\theta+\alpha)$ is also a maximizer for any $\alpha\in\mathbb R$. By Proposition \ref{propm2},
  \[\omega^s(r,\theta+\alpha)-\omega^s(r,\theta)\in\mathbf E_1\quad\forall\,\alpha\in\mathbb R.\]
 Combining this with the fact that $\mathbf E_1$ is a closed subspace of $L^2(D)$, we obtain $\partial_\theta \omega^s \in\mathbf E_1.$  In view of \eqref{dsk1}, there exist constants $a,b,c\in\mathbb R$   such that
  \[\partial_\theta\omega^s(r,\theta)=a J_0(j_{1,1}r)+bJ_1(j_{1,1}r)\cos\theta+cJ_1(j_{1,1}r)
  \sin\theta.\]
  Therefore,
  \begin{align*}
  \omega^s(r,\theta)&=\omega^s(r,0)+\int_0^\theta a J_0(j_{1,1}r)+bJ_1(j_{1,1}r)\cos\tau+cJ_1(j_{1,1}r)\sin\tau d\tau\\
  &=\omega^s(r,0)+a J_0(j_{1,1}r)\theta+ bJ_1(j_{1,1}r)\sin\theta+ cJ_1(j_{1,1}r)(1-\cos\theta).
  \end{align*}
To ensure that $\omega^s$ is $2\pi$-periodic in $\theta$, we must have $a=0$. The desired result then follows immediately.
\end{proof}

Denote
 \[\mathcal I_{\omega^s}:=\left\{v\in L^1(D) \;\middle|\; I(v)=I(\omega^s)\right\}.\]

\begin{lemma}\label{lm662}
The steady flow with vorticity $\omega^s$ is stable in the following sense:
for any $\varepsilon>0$, there exists some $\delta>0,$ such that for any smooth Euler flow with vorticity $\omega$, if
 \[\|\omega(0,\cdot)-\omega^s\|_{L^p(D)}<\delta,\]
 then for any $t\in\mathbb R,$ there exists some $\tilde\omega\in (\omega^s+\mathbf E_1)\cap \mathcal R_{\omega^s}\cap \mathcal I_{\omega^s}$
  such that
 \[ \|\omega(t,\cdot)-\tilde\omega\|_{L^p(D)}<\varepsilon.\]
 \end{lemma}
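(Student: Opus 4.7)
The plan is a compactness-and-contradiction argument built on top of the machinery already assembled: the strong compactness of maximizing sequences from the proof of Theorem~\ref{thmbsc}, the inclusion $\mathcal M\subset (\omega^s+\mathbf E_1)\cap\mathcal R_{\omega^s}$ from Proposition~\ref{propm2}, and two classical facts about the moment of inertia $I$ on the unit disk, namely that it is conserved along smooth Euler flows (stated before \eqref{defofj}) and, being a linear functional with bounded weight $|\mathbf x|^2\le 1$, continuous on $L^p(D)$. The key observation is that the $\tilde\omega$ obtained from Theorem~\ref{thm2} can in fact be chosen in $\mathcal M$, and any such accumulation point automatically inherits $I(\tilde\omega)=I(\omega^s)$ from the conservation law.

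First I would negate the conclusion: suppose there exist $\varepsilon_0>0$, a sequence of smooth Euler flows with vorticities $\omega_n$, and moments $t_n\in\mathbb R$ such that $\|\omega_n(0,\cdot)-\omega^s\|_{L^p(D)}\to 0$, yet
\[
\inf_{\tilde\omega\in(\omega^s+\mathbf E_1)\cap\mathcal R_{\omega^s}\cap\mathcal I_{\omega^s}}\|\omega_n(t_n,\cdot)-\tilde\omega\|_{L^p(D)}\ge\varepsilon_0\quad\text{for every }n.
\]
Since $D$ is simply connected, the circulation vector is absent and $\omega_n$ is an admissible map in the sense of Definition~\ref{defofap}. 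I would then repeat verbatim the portion of the proof of Theorem~\ref{thmbsc} that produces a strongly convergent subsequence: by Lemma~\ref{lm41}(i)(ii) the sequence $\omega_n(t_n,\cdot)$ is asymptotically maximizing for $E$ over $\mathcal R_{\omega^s}$; introducing the followers $\xi_n\in\mathcal R_{\omega^s}$ via \eqref{infvw} and applying the compactness statement \eqref{claim}, one extracts a subsequence along which $\xi_n\to\tilde\omega_0$ strongly in $L^p(D)$ for some $\tilde\omega_0\in\mathcal M$, and \eqref{eta2} then forces $\omega_n(t_n,\cdot)\to\tilde\omega_0$ strongly in $L^p(D)$ as well. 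By Proposition~\ref{propm2}, $\tilde\omega_0\in(\omega^s+\mathbf E_1)\cap\mathcal R_{\omega^s}$.

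To identify the value of $I$ on $\tilde\omega_0$, I would use that $I$ is conserved along each smooth Euler flow in the disk, so $I(\omega_n(t_n,\cdot))=I(\omega_n(0,\cdot))$ for every $n$; and that $I$ is continuous on $L^p(D)$ because $|\mathbf x|^2\in L^{p'}(D)$. Passing to the limit,
\[
I(\tilde\omega_0)=\lim_{n\to\infty}I(\omega_n(t_n,\cdot))=\lim_{n\to\infty}I(\omega_n(0,\cdot))=I(\omega^s),
\]
so $\tilde\omega_0\in(\omega^s+\mathbf E_1)\cap\mathcal R_{\omega^s}\cap\mathcal I_{\omega^s}$. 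Combined with $\|\omega_n(t_n,\cdot)-\tilde\omega_0\|_{L^p(D)}\to 0$ along the chosen subsequence, this contradicts the lower bound $\varepsilon_0$, finishing the proof.

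The only genuinely new content, beyond citing Theorem~\ref{thm2} and Proposition~\ref{propm2}, is the conservation-plus-continuity argument for $I$; the compactness step is simply an appeal to the existing proof of Theorem~\ref{thmbsc}. The one spot requiring a moment of attention is confirming that the limit $\tilde\omega_0$ produced by the follower construction lies in $\mathcal M$ itself (and not merely in the larger weak closure $\overline{\mathcal R_{\omega^s}^{\,w}}$), which is exactly the conclusion of \eqref{claim}; everything else is bookkeeping.
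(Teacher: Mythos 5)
Your proof is correct and follows essentially the same route as the paper's: extract a strongly convergent subsequence of $\omega_n(t_n,\cdot)$ with limit $\tilde\omega\in(\omega^s+\mathbf E_1)\cap\mathcal R_{\omega^s}$ (the paper invokes Theorem~\ref{thm2} together with the compactness of that set, whereas you re-run the follower/compactness argument from the proof of Theorem~\ref{thmbsc}), then pass the conserved quantity $I$ to the limit using its continuity on $L^p(D)$. The contradiction framing is a cosmetic difference only.
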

\begin{proof}
Fix a sequence of smooth solutions $\{\omega_n\}$ to the Euler equations and a sequence of moments $\{t_n\}\subset\mathbb R$ such that
$\omega_n(0,\cdot)\to  \omega^s$ in $L^p(D)$
as $n\to\infty.$
By  Theorem \ref{thm2}, we know that, up to a subsequence,
$\omega_n(t_n,\cdot)\to \tilde\omega$ in $L^p(D)$
as $n\to\infty$ for some $\tilde \omega\in  (\omega^s+\mathbf E_1)\cap \mathcal R_{\omega^s}$.   Since $I$ is conserved under the Euler dynamics, we have that
\[I(\omega_n(0,\cdot)) =I(\omega_n(t_n,\cdot))\quad\forall\,n.\]
Passing to the limit $n\to\infty$ yields $I(\tilde\omega)=I(\omega^s)$, which implies $\tilde\omega\in \mathcal I_{\omega^s}.$ This completes the proof.

\end{proof}

\begin{lemma}\label{lm663}
It holds that
 \[\mathcal O_{\omega^s}=(\omega^s+\mathbf E_1)\cap\mathcal R_{\omega^s}\cap \mathcal I_{\omega^s}.\]

\end{lemma}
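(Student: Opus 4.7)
The plan is to establish the two inclusions separately.

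For the forward inclusion $\mathcal{O}_{\omega^s}\subseteq(\omega^s+\mathbf{E}_1)\cap\mathcal{R}_{\omega^s}\cap\mathcal{I}_{\omega^s}$, I would use the decomposition $\omega^s=\omega^r+\omega^e$ furnished by Lemma \ref{lm661}. Inspection of the basis in \eqref{dsk1} shows that $\mathbf{E}_1$ is invariant under planar rotations, so for each $\alpha\in\mathbb{R}$ the difference $\omega^s(r,\theta+\alpha)-\omega^s(r,\theta)=\omega^e(r,\theta+\alpha)-\omega^e(r,\theta)$ still lies in $\mathbf{E}_1$. Equimeasurability and invariance of $I$ under rotation are immediate from the fact that rigid rotation is a measure-preserving diffeomorphism of the disk and $|\mathbf{x}|^2$ is radial.

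For the reverse inclusion, I would take $\omega$ in the intersection and show that it is a rotation of $\omega^s$. Since $\omega-\omega^s\in\mathbf{E}_1$ and $\omega^s-\omega^r=\omega^e\in\mathbf{E}_1$, the difference $\omega-\omega^r$ lies in $\mathbf{E}_1$, so using \eqref{dsk1},
\[
\omega(r,\theta)-\omega^r(r)=aJ_0(j_{1,1}r)+J_1(j_{1,1}r)(b\cos\theta+c\sin\theta).
\]
Moreover, the proof of Lemma \ref{lm661} shows that $\omega^e$ has no $J_0$-component, so
\[
\omega^s(r,\theta)-\omega^r(r)=J_1(j_{1,1}r)(B\cos\theta+C\sin\theta)
\]
for some $B,C\in\mathbb{R}$. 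My next move is to exploit the constraint $I(\omega)=I(\omega^s)$: angular integration annihilates the $\cos\theta$ and $\sin\theta$ contributions, reducing the constraint to $a\int_0^1 r^3 J_0(j_{1,1}r)\,dr=0$. A brief Bessel computation (using $(sJ_1(s))'=sJ_0(s)$, $(s^2J_2(s))'=s^2J_1(s)$, and the recurrence $J_0(x)+J_2(x)=2J_1(x)/x$, which combined with $J_1(j_{1,1})=0$ gives $J_2(j_{1,1})=-J_0(j_{1,1})\neq 0$) shows this integral equals $-2J_2(j_{1,1})/j_{1,1}^2\neq 0$, forcing $a=0$.

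Finally I would use equimeasurability through the identity $\int_D\omega^2\,d\mathbf{x}=\int_D(\omega^s)^2\,d\mathbf{x}$. The radial/angular cross terms vanish by integration in $\theta$, and the pure angular contributions reduce to $(b^2+c^2)$ and $(B^2+C^2)$ multiplied by the same positive factor $\pi\int_0^1 rJ_1^2(j_{1,1}r)\,dr$; hence $b^2+c^2=B^2+C^2$. The elementary identity $B\cos(\theta+\alpha)+C\sin(\theta+\alpha)=(B\cos\alpha+C\sin\alpha)\cos\theta+(-B\sin\alpha+C\cos\alpha)\sin\theta$ then furnishes an $\alpha\in\mathbb{R}$ with $b=B\cos\alpha+C\sin\alpha$ and $c=-B\sin\alpha+C\cos\alpha$, which yields $\omega=\omega^s(r,\theta+\alpha)\in\mathcal{O}_{\omega^s}$. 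The critical step, and what I expect to be the only non-routine calculation, is the verification that $\int_0^1 r^3J_0(j_{1,1}r)\,dr\neq 0$: this is exactly what makes the moment-of-inertia constraint effective at killing the spurious $J_0$ degree of freedom that equimeasurability alone cannot detect.
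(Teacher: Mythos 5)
Your proposal is correct and follows essentially the same route as the paper: decompose via Lemma \ref{lm661}, parametrize the intersection using the explicit basis \eqref{dsk1}, use the moment-of-inertia constraint together with $\int_0^1 r^3 J_0(j_{1,1}r)\,dr\neq 0$ to eliminate the $J_0$ discrepancy, and use the $L^2$-norm identity from equimeasurability to match the angular amplitude up to a rotation. The only cosmetic differences are that the paper obtains the forward inclusion by citing Proposition \ref{propm2} (every rotation is a maximizer) rather than by direct rotation-invariance of $\mathbf E_1$, and it verifies the Bessel integral's nonvanishing by the sign of $\int_0^{j_{1,1}}J_1(r)r^2\,dr$ rather than via $J_2(j_{1,1})=-J_0(j_{1,1})$.
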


\begin{proof}
The inclusion
\[
(\omega^s + \mathbf E_1) \cap \mathcal R_{\omega^s} \cap \mathcal I_{\omega^s} \supset \mathcal O_{\omega^s}
\]
follows from \eqref{rotainv} and Proposition~\ref{propm2}.
Below, we prove the reverse inclusion
\[
(\omega^s + \mathbf E_1) \cap \mathcal R_{\omega^s} \cap \mathcal I_{\omega^s} \subset \mathcal O_{\omega^s}.
\]
Fix  an arbitrary $v\in(\omega^s+\mathbf E_1)\cap\mathcal R_{\omega^s}\cap \mathcal I_{\omega^s}.$  Let $\omega^r$ be the radial function in Lemma \ref{lm661}. Then $\omega^s$ and $v$ can be expressed as
    \[
    \omega^s=\omega^r+aJ_0(j_{1,1}r)+bJ_1(j_{1,1}r)\cos(\theta+\beta),
  \,\,\,
       v=\omega^r+a'J_0(j_{1,1}r)+b'J_1(j_{1,1}r)\cos(\theta+\beta'),
       \]
   where $a,b,\beta,a',b',\beta'\in\mathbb R.$  Without loss of generality, we may assume that \( b, b' \geq 0 \) (this can be achieved by choosing new \(\beta\) and \(\beta'\)).
 It suffices to show
   $a=a'$ and $b=b'$.
   Since $v\in\mathcal I_{\omega^s}$, we have that
   \begin{equation}\label{cpq01}
   \int_D|\mathbf x|^2v(\mathbf x)d\mathbf x=\int_D|\mathbf x|^2\omega^s(\mathbf x)d\mathbf x.
   \end{equation}
  Inserting the expressions for $\omega^s$ and $v$ into \eqref{cpq01} and computing the integrals in polar coordinates,
   we obtain
   \[a\int_0^1J_0(j_{1,1}r)r^3dr=a'\int_0^1J_0(j_{1,1}r)r^3dr.\]
To proceed, we claim that
 \begin{equation}\label{lst}
\int_0^1 J_0(j_{1,1}r)r^3 dr\neq 0.
\end{equation}
In fact,
\[
 \int_0^1 J_0(j_{1,1}r)r^3 dr =\frac{1}{j_{1,1}^4}\int_0^{j_{1,1}}J_0(r)r^3 dr
  =-\frac{2}{j_{1,1}^4}\int_0^{j_{1,1}} J_1(r)r^2dr
 <0.
\]
where we used the facts that $(J_1(r)r)'=J_0(r)r$ (see \cite[p. 93]{Bo}) and that $J_1$ is positive in $(0,j_{1,1})$.
From \eqref{cpq01} and \eqref{lst}, we obtain
$
a=a'.
$
Therefore, $\omega^s$ and $v$ can be written as
 \begin{equation}\label{womnf}
\omega^s=\omega^r+aJ_0(j_{1,1}r)+bJ_1(j_{1,1}r)\cos(\theta+\beta),\,\,\,v=\omega^r+aJ_0(j_{1,1}r)+b'J_1(j_{1,1}r)\cos(\theta+\beta').
\end{equation}
Since $v\in\mathcal R_{\omega^s}$, we have
\begin{equation}\label{l2nm}
\int_Dv^2d\mathbf x=\int_D(\omega^s)^2d\mathbf x.
\end{equation}
Inserting \eqref{womnf} into \eqref{l2nm}, a straightforward computation gives
  $b^2=b'^2$,
and thus $b=b'$ (noting that we have assumed $b,b'\geq 0$). The proof is complete.

\end{proof}

\begin{proof}[Proof of Theorem \ref{thm3}(ii)]
It follows from Lemmas \ref{lm661}-\ref{lm663}.
\end{proof}

\bigskip
\noindent{\bf Acknowledgements:} G. Wang is supported by National Natural Science Foundation of China (12471101) and the Fundamental Research Funds for the Central Universities (DUT23RC(3)077).

\bigskip
\noindent{\bf  Data availability statement} All data generated or analysed during this study are included in this published article.

\bigskip
\noindent{\bf Conflict of interest}  The authors declare that they have no conflict of interest to this work.

\phantom{s}
 \thispagestyle{empty}

\end{document}